%2multibyte Version: 5.50.0.2953 CodePage: 1254
\documentclass{article}%
\usepackage{graphicx}
\usepackage{amsmath}
\usepackage{amsfonts}
\usepackage{amssymb}%
\setcounter{MaxMatrixCols}{30}
%TCIDATA{OutputFilter=latex2.dll}
%TCIDATA{Version=5.50.0.2953}
%TCIDATA{Codepage=1254}
%TCIDATA{CSTFile=LaTeX article (bright).cst}
%TCIDATA{Created=Tue Apr 06 14:54:02 2004}
%TCIDATA{LastRevised=Monday, January 15, 2018 18:14:04}
%TCIDATA{<META NAME="GraphicsSave" CONTENT="32">}
%TCIDATA{<META NAME="SaveForMode" CONTENT="1">}
%TCIDATA{BibliographyScheme=Manual}
%TCIDATA{<META NAME="DocumentShell" CONTENT="General\Blank Document">}
%TCIDATA{Language=American English}
%BeginMSIPreambleData
\providecommand{\U}[1]{\protect\rule{.1in}{.1in}}
%EndMSIPreambleData
\providecommand{\U}[1]{\protect\rule{.1in}{.1in}}
\providecommand{\U}[1]{\protect\rule{.1in}{.1in}}
\providecommand{\U}[1]{\protect\rule{.1in}{.1in}}
\providecommand{\U}[1]{\protect\rule{.1in}{.1in}}
\providecommand{\U}[1]{\protect\rule{.1in}{.1in}}
\providecommand{\U}[1]{\protect\rule{.1in}{.1in}}
\providecommand{\U}[1]{\protect\rule{.1in}{.1in}}
\providecommand{\U}[1]{\protect\rule{.1in}{.1in}}
\providecommand{\U}[1]{\protect\rule{.1in}{.1in}}
\providecommand{\U}[1]{\protect\rule{.1in}{.1in}}
\providecommand{\U}[1]{\protect\rule{.1in}{.1in}}
\providecommand{\U}[1]{\protect\rule{.1in}{.1in}}
\providecommand{\U}[1]{\protect\rule{.1in}{.1in}}
\providecommand{\U}[1]{\protect\rule{.1in}{.1in}}
\providecommand{\U}[1]{\protect\rule{.1in}{.1in}}
\providecommand{\U}[1]{\protect\rule{.1in}{.1in}}
\providecommand{\U}[1]{\protect\rule{.1in}{.1in}}
\providecommand{\U}[1]{\protect\rule{.1in}{.1in}}
\providecommand{\U}[1]{\protect\rule{.1in}{.1in}}
\providecommand{\U}[1]{\protect\rule{.1in}{.1in}}
\providecommand{\U}[1]{\protect\rule{.1in}{.1in}}
\providecommand{\U}[1]{\protect\rule{.1in}{.1in}}
\providecommand{\U}[1]{\protect\rule{.1in}{.1in}}
\providecommand{\U}[1]{\protect\rule{.1in}{.1in}}
\providecommand{\U}[1]{\protect\rule{.1in}{.1in}}
\providecommand{\U}[1]{\protect\rule{.1in}{.1in}}
\providecommand{\U}[1]{\protect\rule{.1in}{.1in}}
\providecommand{\U}[1]{\protect\rule{.1in}{.1in}}

\setcounter{page}{1}
\setlength{\textheight}{21.6cm}
\setlength{\textwidth}{14cm}
\setlength{\oddsidemargin}{1cm}
\setlength{\evensidemargin}{1cm}
\pagestyle{myheadings}
\thispagestyle{empty}
\newtheorem{theorem}{Theorem}
{}

\newtheorem{conclusion}{Conclusion}
\newtheorem{condition}{Condition}

\newtheorem{definition}{Definition}

\newtheorem{lemma}{Lemma}
{}

\newtheorem{proposition}{Proposition}
\newtheorem{remark}{Remark}

\newenvironment{proof}[1][Proof]{\textbf{#1.} }{\ \rule{0.5em}{0.5em}}

\oddsidemargin 1.0cm \evensidemargin 1.0cm
\voffset -1cm
\topmargin 0.1cm
\headheight 0.5cm
\headsep 1.5cm
\begin{document}

\title{Spectral Expansion Series with Parenthesis for the Nonself-adjoint Periodic
Differential Operators}
\author{O. A. Veliev\\{\small \ Depart. of Math., Dogus University, }\\{\small Ac\i badem, 34722, Kadik\"{o}y, \ Istanbul, Turkey.}\\\ {\small e-mail: oveliev@dogus.edu.tr}}
\date{}
\maketitle

\begin{abstract}
In this paper we construct the spectral expansion for the differential
operator generated in $L_{2}(-\infty,\infty)$ by ordinary differential
expression of arbitrary order with periodic complex-valued coefficients by
introducing new concepts as essential spectral singularities and singular
quasimomenta and using the series with parenthesis. Moreover, we find a
criteria for which the spectral expansion coincides with the Gelfand expansion
for the self-adjoint case.

Key Words: Periodic operators, Spectral singularity, Spectral expansion.

AMS Mathematics Subject Classification: 34L05, 34L20.

\end{abstract}

\section{ Introduction and Preliminary Facts}

We consider the operator $L$ generated in the space $L_{2}(-\infty,\infty)$ by
the differential expression
\begin{equation}
\ell(y)=y^{(n)}+p_{2}y^{(n-2)}+\ldots+p_{n}y,
\end{equation}
where $p_{2},\;p_{3},\;\ldots,\;p_{n}$ are \ the $1$-periodic, complex-valued
functions satisfying
\begin{equation}
p_{k}^{(n-k)}\in L_{1}[0,1],\text{ }\forall k=2,3,...,n.
\end{equation}
The spectrum $\sigma(L)$ of $L$ is the union of the spectra $\sigma(L_{t})$ of
the operators $L_{t}$ for $t\in\lbrack0,2\pi)$ (see [2, 4, 7]) generated in
$L_{2}[0,1]$ by (1) and the $t$-periodic boundary conditions%

\begin{equation}
U_{\nu}(y):=y^{(\nu-1)}(1)-e^{it}y^{(\nu-1)}(0)=0,\hspace{1cm}\nu
=1,2,\cdots,n.
\end{equation}
The spectral expansion for the self-adjoint operator $L$\ was constructed by
Gelfand [2], Titchmarsh [8] and Tkachenko [9]. The existence of the spectral
singularities and the absence of the Parseval's equality for the
nonself-adjoint operator $L_{t}$ do not allow us to apply the elegant method
of Gelfand (see\ [2]) for construction of the spectral expansion for the
nonself-adjoin operator $L$. Note that the spectral singularity of $L$ is the
point of $\sigma(L)$ in neighborhood on which the projections of the operator
$L$ are not uniformly bounded. Therefore the existence of the spectral
singularities does not allow $L$ to be the spectral operators and hence the
theory of spectral operators is ineffective for the construction of the
spectral expansion for the nonself-adjoint periodic differential operators.
This situation give rise to find out a new approach for the construction of
the spectral expansion of $L$.

In this paper, we construct the spectral expansion for the nonself-adjoint
operator $L$ by introducing new concepts as singular quasimomenta (SQ),
essential spectral singularities (ESS) and ESS at infinity defined at the end
of this section. The case $n=2$ was investigated in [16]. To discuss in detail
the obtained results we need some preliminary facts about:

\textbf{(a) }eigenvalues and eigenfunctions of $L_{t},$

\textbf{(b)} spectral singularities and spectrality of $L$,

\textbf{(c) }spectral expansion of $L.$

\textbf{(a) Eigenvalues and eigenfunctions of }$L_{t}.$ First let us introduce
some definitions, notations and results of [12] used in this paper. It is
well-known [12] that the boundary conditions (3) \ \ is strongly regular for
$t\in\mathbb{C}$ and for $t\in\left(  \mathbb{C}\backslash\left\{  n\pi
:n\in\mathbb{Z}\right\}  \right)  $ if $n$ is odd $\left(  n=2\mu+1\right)  $
and even $\left(  n=2\mu\right)  $ respectively, where $\mu=1,2,...$.
Therefore the system of the root functions of $L_{t}$ for $t\neq0,\pi$ if
$n=2\mu$ and for $t\in\mathbb{C}$ if $n=2\mu+1$ form a Riesz basis in
$L_{2}(0,1)$ (see [5]).

The eigenvalues $\lambda_{k}(t)$ of the boundary-value problem (1) and (3),
called as Bloch eigenvalues, are $-(\rho_{k}(t))^{n},$ where $\rho_{k}(t)$ are
the zeroes of the characteristic determinant
\begin{equation}
\Delta(\rho,t):=\det(U_{\nu}(y_{j})_{j,\nu=1}^{n}.
\end{equation}
Here $y_{1}(x,\rho),y_{2}(x,\rho),\ldots,y_{n}(x,\rho)$ are $n$ linearly
independent solutions of $\ $the equation $l(y)=-\rho^{n}y$ satisfying
\begin{equation}
y_{j}^{(\nu-1)}(x,\rho)=\rho^{\nu-1}e^{\rho\omega_{j}x}(\omega_{j}^{\nu
-1}+O\left(  \rho^{-1}\right)  )
\end{equation}
for $\nu=1,2,\cdots,n$, where the numbers $\omega_{1},\omega_{2},\ldots
,\omega_{n}$ denote the different $n$th roots of $-1$.

Besides we use the following result of [12] called here as uniform asymptotic
formulas theorem that were obtained by following the arguments of [6] of the
proof of the asymptotic formulas for the eigenvalues and eigenfunctions and
taking into consideration the uniformity with respect to $t.$

\textbf{Uniform asymptotic formulas theorem}\textit{. If }$n=2\mu+1$\textit{
then for arbitrary }$t\in\mathbb{C}$\textit{, if }$n=2\mu$\textit{ then for
}$t\neq0,\pi,$\textit{ the large eigenvalues of the operator }$L_{t}$\textit{
consist of the sequence }$\{\lambda_{k}(t):\left\vert k\right\vert
>N\},$\textit{ where }$N$ \textit{is a large number}, \textit{satisfying}%
\begin{equation}
\lambda_{k}(t)=\left(  i(2k\pi+t)\right)  ^{n}+O\left(  k^{n-2}\right)
\end{equation}
as $\left\vert k\right\vert \rightarrow\infty.$ \textit{For any fixed }%
$h$\textit{ (}$h\in(0,1)$\textit{) there exists an integer }$N(h)$\textit{
such that the eigenvalue }$\lambda_{k}(t)$\textit{ is simple for all
}$|k|>N(h)$\textit{ if either }$n=2\mu+1$ \textit{and }$t\in$\textit{ }%
$Q$\textit{ or }$n=2\mu$\textit{ and }$t\in Q_{h},$\textit{ where }%
$Q=U_{1}\left(  [0,2\pi]\right)  $ \textit{and} $U_{\delta}\left(
[0,2\pi]\right)  $ \textit{denotes the set of all }$t\in C$\textit{ whose
distance from }$[0,2\pi]$\textit{ is less than} $\delta,$\textit{ }%
$Q_{h}=\{t\in Q:$ $|t-\pi k|\geq h,$ $\forall k=0,1,2\}.$ \textit{Formula (6)
is uniform with respect to }$t$\textit{ in }$Q$\textit{ for }$n=2\mu
+1$\textit{ and in }$Q_{h}$\textit{ for }$n=2\mu.$

\textit{The normalized eigenfunction }$\Psi_{k,t}(x)$\textit{ corresponding to
the eigenvalue }$\lambda_{k}(t)$\textit{\ satisfies }%
\begin{equation}
\Psi_{k,t}(x)=\left(  \parallel e^{itx}\parallel\right)  ^{-1}e^{i(2k\pi
+t)x}+O(k^{-1})
\end{equation}
as $\left\vert k\right\vert \rightarrow\infty.$ \textit{For }$n=2\mu
+1$\textit{ formula (7) is uniform with respect to }$t$ in $Q$\textit{ and
}$x$ \textit{in} $[0,1].$ \textit{For }$n=2\mu$\textit{ it is uniform with
respect to }$t$ \textit{in} $Q_{h}$\textit{ and }$x$ \textit{in} $[0,1].$

Note that a formula \ $f(k,t)=O(g(k))$ as $\left\vert k\right\vert
\rightarrow\infty$ is said to be uniform with respect to $t$ in a set $I$ if
there exist positive constants $N$ and $c$ such that $\mid f(k,t)\mid<c\mid
g(k)\mid$ for all $t\in I$ and $\mid k\mid\geq N.$ Similarly, a formula
\ $f(k,t,x)=O(g(k))$ as $\left\vert k\right\vert \rightarrow\infty$ is said to
be uniform with respect to $t$ in a set $I$ and $x$ in a set $X$ if there
exist positive constants $N$ and $c$ such that $\mid f(k,t,x)\mid<c\mid
g(k)\mid$ for all $t\in I,$ $x\in X$ and $\mid k\mid\geq N.$

From (3)-(5) one can readily see that $\Delta(\lambda,t),$ where
$\lambda=-\rho^{n},$ has the form%
\begin{equation}
\Delta(\lambda,t)=e^{int}+f_{1}(\lambda)e^{i(n-1)t}+f_{2}(\lambda
)e^{i(n-2)t}+...+f_{n}(\lambda),
\end{equation}
that is, $\Delta(\lambda,t)$ is a polynomial of $e^{it}$\ with entire
coefficients $f_{1}(\lambda),$ $f_{2}(\lambda),....$ Therefore the multiple
eigenvalues of the operators $L_{t}$ are the zeros of the resultant
$R(\lambda)$ of the polynomials $\Delta(\lambda,t)$ \ and $\frac{\partial
}{\partial\lambda}\Delta(\lambda,t).$ Since $R(\lambda)$ is an entire
function, we have the following obvious statements formulated in the following remark.

\begin{remark}
For any bounden region $D$ of the complex plane the set of all multiple
eigenvalues of the operators $L_{t}$ for all $t\in\mathbb{C}$ , that is, the
set of all multiple Bloch eigenvalues lying in $D$ is finite. It with uniform
asymptotic formulas theorem implies that if $n=2\mu+1,$ then the set of all
multiple Bloch eigenvalues is a finite set $\left\{  a_{k}%
:k=1,2,...,j\right\}  .$ In the case $n=2\mu$ the set of all multiple Bloch
eigenvalues is either finite set or a sequence $\left\{  a_{k}:k\in
\mathbb{N}\right\}  $ satisfying $a_{k}\rightarrow\infty$ as $k\rightarrow
\infty.$ For each $a_{k}$ there exist at most $n$ values of $t\in Q$
satisfying $\Delta(a_{k},t)=0,$ since $\Delta(\lambda,t)$ is a polynomial of
$e^{it}$\ of order $n$ (see (8)) and it follows from (3) that the quasimomenta
$t$ and $t+2\pi n$ for $n\in\mathbb{Z}$ are the same. Therefore the set
$A:=\cup_{k}A_{k},$ where $A_{k}=\{t\in Q:\Delta(a_{k},t)=0\},$ is either
finite or countable set. Namely, if $n=2\mu+1$ then $A$ is finite. If
$n=2\mu,$ then it follows from the uniform asymptotic formulas theorem that
the possible accumulation points of the set $A\cap Q$ are $0$ and $\pi.$
Denote the set $(A\cap Q)\cup\{0,\pi\}$ by $A^{\prime}.$ If $t\in\left(
Q\backslash A^{\prime}\right)  $ then all eigenvalues of $L_{t}$ are simple
and the system of eigenfunctions of $L_{t}$ forms a Riesz basis.
\end{remark}

Replacing the $i$th row of the determinant $\Delta(\lambda_{k}(t),t)$ (see
(4)) by the row vector

$(y_{1}(x,\lambda_{k}(t)),y_{2}(x,\lambda_{k}(t)),\ldots,y_{n}(x,\lambda
_{k}(t)))$ we get the eigenfunctions $\Psi_{k,i,t}(x)$ for $i=1,2,...,n$
corresponding to the eigenvalue $\lambda_{k}(t).$ It is clear that if $t\notin
A$ then at least one of these eigenfunctions is nonzero, since the rank of the
matrix in (4) is $n-1.$ For $t\notin A$ we denote by $\Psi_{k,t}$ the
normalized eigenfunctions of $L_{t}$ corresponding to the eigenvalue
$\lambda_{k}(t)$ and by $X_{k,t}$ the eigenfunction of the adjoint operator
$L_{t}^{\ast}$ which is orthogonal to all eigenfunctions of $L_{t}$ except
$\Psi_{k,t}$ and $(\Psi_{k,t},X_{k,t})=1.$ It is clear that%
\begin{equation}
X_{k,t}=\left(  \alpha_{k}(t)\right)  ^{-1}\Psi_{k,t}^{\ast},
\end{equation}
where $\Psi_{k,t}^{\ast}$ is the normalized eigenfunctions of $L_{t}^{\ast}$
corresponding to the eigenvalue $\overline{\lambda_{k}(t)}$ and $\alpha
_{k}(t)=(\Psi_{k,t}^{\ast},\Psi_{k,t}).$ Thus $\{X_{k,t}:k\in\mathbb{Z}\}$ is
a biorthogonal system of $\{\Psi_{k,t}:k\in\mathbb{Z}\}.$

\begin{remark}
In Remark 5 we show that the eigenvalues of $L_{t}$ can be numbered (counting
the multiplicity) by elements of the set $\mathbb{Z}$ such that (6) holds and
for each $k$ the function $\lambda_{k}$ is continuous in $[0,2\pi)$ and is
analytic in some neighborhood of $t\in\lbrack0,2\pi)\backslash A(k),$ where
$A(k)$ is a finite subset of $[0,2\pi)\cap A$ such that if $t\notin A(k),$
then $\lambda_{k}(t)$ is a simple eigenvalue. Thus $\sigma(L_{t})=\left\{
\lambda_{k}(t):k\in\mathbb{Z}\right\}  $ and $\sigma(L)$ is the union of the
continuous curves $\Gamma_{k}:=\left\{  \lambda_{k}(t):k\in\lbrack
0,2\pi)\right\}  $ for $k\in\mathbb{Z}.$\ From the above arguments it follows
that $\Psi_{k,t}^{\ast},$ $\Psi_{k,t}$ and $\alpha_{k}(t)$ continuously depend
on $t$ in $[0,2\pi)\backslash A(k).$ Moreover if $t\notin A(k),$ then
$\alpha_{k}(t)\neq0$ since the system $\{\Psi_{k,t}:k\in\mathbb{Z}\}$ is
complete. Therefore $\frac{1}{\alpha_{k}}$ is also continuous in
$[0,2\pi)\backslash A(k).$ Moreover, $\Psi_{k,\cdot}^{\ast},\Psi_{k,\cdot}$
and $\frac{1}{\alpha_{k}}$ are the periodic functions of period $2\pi$ with
respect to the quasimomentum $t.$
\end{remark}

\textbf{(b)} \textbf{Spectral singularities and spectrality of} $L.$ If
$\lambda_{k}(t)$ is a simple eigenvalue of $L_{t},$ then the spectral
projection $e(\lambda_{k}(t))$ defined by contour integration of the resolvent
of $L_{t}(q)$ over the closed contour containing only the eigenvalue
$\lambda_{k}(t)$ has the form%
\begin{equation}
e(\lambda_{k}(t))f=\left(  \overline{\alpha_{k}(t)}\right)  ^{-1}(f,\Psi
_{k,t}^{\ast})\Psi_{k,t},\text{ }\left\Vert e(\lambda_{k}(t))\right\Vert
=\left\vert \left(  \alpha_{k}(t)\right)  ^{-1}\right\vert
\end{equation}
(see [6]). Moreover, in [10] it was proved that if the curve $\gamma
\subset\Gamma_{k}$ does not contain multiple eigenvalues and the common roots
of
\begin{equation}
\Delta(\lambda,t)=0\text{ }\And\frac{\partial\Delta(\lambda,t)}{\partial t}=0,
\end{equation}
then for the projection $P(\gamma)$ of $L$ corresponding to $\gamma$ we have
\begin{equation}
\left\Vert P(\gamma)\right\Vert =\sup\nolimits_{t\in\delta}\left\vert \left(
\alpha_{k}(t)\right)  ^{-1}\right\vert ,
\end{equation}
where $\delta=\left\{  t\in\lbrack0,2\pi):\lambda_{k}(t)\in\gamma\right\}  $.
Then the following definition was given.

\begin{definition}
We say that $\lambda\in\sigma(L)$ is a spectral singularity of $L$ if for all
$\varepsilon>0$\ there exists a sequence $\{\gamma_{n}\}$ of arcs $\gamma
_{n}\subset\{z\in\sigma(L):\mid z-\lambda\mid<\varepsilon\}$ such that they do
not contain multiple eigenvalues and common roots of (11) and
\begin{equation}
\lim_{n\rightarrow\infty}\parallel P(\gamma_{n})\parallel=\infty.
\end{equation}

\end{definition}

Note that a similar definition for the case $n=2$ was given in [3]. In the
similar way, we defined in [14, 15] the spectral singularity at infinity.

\begin{definition}
We say that the operator $L$ has a spectral singularity at infinity if there
exists a sequence $\{\gamma_{n}\}$ of the arcs of $\sigma(L)$ such that they
do not contain multiple eigenvalues and common roots of (11) and
$d(0,\gamma_{n})\rightarrow\infty$ as $n\rightarrow\infty$ and (13) holds,
where $d(0,\gamma_{n})$ is the distance from the point $(0,0)$ to the arc
$\gamma_{n}.$
\end{definition}

The existence of the spectral singularities does not allow $L$ to be the
spectral operator, since McGarvey [4] proved that $L$ is a spectral operator
if and only if the projections of the operators $L_{t}$ are bounded uniformly
with respect to $t$ in $[0,2\pi)$ and we have the estimations (10) and (12).
Moreover, Gasymov's paper [1] shows that, in general, the operator $L$ for
$n=2$ has infinitely many spectral singularities. Nevertheless, Gesztezy and
Tkachenko [3] proved two versions of a criterion for the Hill operator to be a
spectral operator of scalar type, in sense of Dunford, one analytic and one
geometric. The analytic version was stated in term of the solutions of Hill
equation. The geometric version of the criterion uses algebraic and geometric
\ properties of the spectra of periodic/antiperiodic and Dirichlet boundary
value problems. However, the discussed papers show that the set of potentials
$q$ for which the Hill operator $H(q)$ with potential $q$ is spectral is a
small subset of the set of periodic functions and it is very hard to describe
explicitly the required subset. In papers [14, 15] we found the explicit
conditions on the potential $q$ such that $H(q)$ is an asymptotically spectral
operator. The set of the potentials constructed in [14, 15] is also a small
subset of the set of the periodic functions. Thus the theory of spectral
operators is ineffective for construction of the spectral expansion for the
nonself-adjoint periodic differential operators.

\textbf{(c) Spectral expansion of} $L.$ By Gelfand's Lemma (see [2]) for every
$f\in L_{2}(-\infty,\infty)$ there exists $\Upsilon f$ $\in L_{2}%
([0,1]\times\lbrack0,2\pi])$ such that
\begin{equation}
\text{ }f(x)=\frac{1}{2\pi}\int\nolimits_{0}^{2\pi}f_{t}(x)dt,\text{ }\frac
{1}{2\pi}\int\nolimits_{0}^{2\pi}\int_{0}^{1}\left\vert f_{t}(x)\right\vert
^{2}dxdt=\int_{\mathbb{R}}\left\vert f(x)\right\vert ^{2}dx,
\end{equation}
where%
\begin{equation}
f_{t}(x):=\left(  \Upsilon f\right)  (x,t)=\sum\nolimits_{k=-\infty}^{\infty
}f(x+k)e^{-ikt},\text{ }f_{t}(x+1)=e^{it}f_{t}(x).
\end{equation}
Let $-h\in(0,\frac{1}{2})\backslash A^{\prime}$ and $l$\ be a continuous curve
joining the points $-h$ and $2\pi-h$ and satisfying
\begin{equation}
l\subset Q\backslash A^{\prime}\text{ }\And l\subset Q_{h}\backslash
A^{\prime}\text{ }%
\end{equation}
if $n=2\mu+1$ and $n=2\mu$ respectively. If $f_{t}(x)$ analytically depend on
$t$ in $\overline{D}$ for each $x,$ where $D$ is the domain enclosed by
$l\cup\lbrack-h,2\pi-h]$ and everywhere $\overline{D}$ denotes the closure of
$D,$ then using (14), (15) and Cauchy's theorem we obtain
\begin{equation}
f(x)=\frac{1}{2\pi}\int\nolimits_{0}^{2\pi}f_{t}(x)dt=\frac{1}{2\pi}\int
_{l}f_{t}(x)dt.
\end{equation}
Since the last equality play the crucial role for the spectral expansion
theorem, to avoid the eclipsing the essence of the paper by technical details
we consider the set $E$ of $f\in L_{2}(-\infty,\infty)$ satisfying the
following conditions (however, the obtained results can be easily curry out
for large set).

\begin{condition}
$(i)$ For each $f\in E$ there exists $\beta>0$ such that the Gelfand transform
$\Upsilon f$ and its derivative with respect to $t$ are continuous functions
of two variables $\left(  x,t\right)  $ on $[0,1]\times U_{\beta}\left(
[0,2\pi]\right)  .$ $(ii)$ Fourier transform $g(\lambda)$ of $f\in E$
approaches zero as $\lambda\rightarrow\infty.$
\end{condition}

By (16), for each $t\in l$ the system $\{\Psi_{k,t}:k\in\mathbb{Z}\}$ form a
Reisz basis of $L_{2}[0,1]$. Therefore
\begin{equation}
f_{t}(x)=\sum_{k\in\mathbb{Z}}a_{k}(t)\Psi_{k,t}(x),
\end{equation}
where
\begin{equation}
a_{k}(t)=\int_{0}^{1}f_{t}(x)\overline{X_{k,t}(x)}dx=\int_{\mathbb{R}%
}f(x)\overline{X_{k,t}(x)}dx,
\end{equation}
$X_{k,t}$ is defined in (9), and $\Psi_{k,t}(x+1)=e^{it}\Psi_{k,t}(x),$
$\Psi_{k,t}^{\ast}(x+1)=e^{i\overline{t}}\Psi_{k,t}^{\ast}(x).$ Using (18) in
(17), we get
\begin{equation}
f(x)=\frac{1}{2\pi}\int\nolimits_{l}f_{t}(x)dt=\frac{1}{2\pi}\int
\nolimits_{l}\sum\nolimits_{k\in\mathbb{Z}}a_{k}(t)\Psi_{k,t}(x)dt.
\end{equation}

Repeating the proof of the\ term by term integration of the series in (20)
which was given in [11] (for this one can look also the Appendix A of [13]) we
obtain that for $f\in E$ and for any piecewise smooth curve $\gamma$ that is
compact subset of $Q\backslash A$ and $Q_{h}\backslash A$ for the odd and even
cases respectively, the following holds
\begin{equation}
\int\nolimits_{\gamma}\sum\nolimits_{k\in\mathbb{Z}}a_{k}(t)\Psi
_{k,t}(x)dt=\sum\nolimits_{k\in\mathbb{Z}}\int\nolimits_{\gamma}a_{k}%
(t)\Psi_{k,t}(x)dt.
\end{equation}
Hence by (20) we have
\begin{equation}
f(x)=\frac{1}{2\pi}\sum_{k\in\mathbb{Z}}\int\nolimits_{l}a_{k}(t)\Psi
_{k,t}(x)dt,
\end{equation}
where the series converges in the norm of $L_{2}(a,b)$ for every
$a,b\in\mathbb{R}$.

To get the spectral expansion in the term of $t$ from (22) we need to consider
the integrability of $a_{k}(t)\Psi_{k,t}(x)$ over $[0,2\pi)$ for almost all
$x$ and then replace the integrals over $l$ by the integrals over $[0,2\pi)$.
In Theorem 9 using the equality
\begin{equation}
a_{k}(t)\Psi_{k,t}(x)=\frac{1}{\overline{\alpha_{k}(t)}}\left(  \int_{0}%
^{1}f_{t}(x)\overline{\Psi_{k,t}^{\ast}(x)}dx\right)  \Psi_{k,t}(x)
\end{equation}
(see (9) and (19)) we prove that the existence of the integral of
$a_{k}(t)\Psi_{k,t}(x)$ with respect to $t$ over $[0,2\pi)$ can be reduced to
the investigation of the integrability of $\frac{1}{\alpha_{k}}.$ Therefore we
introduce the following notions which is independent of $f$ and depend only on
the spectral property of $L$.

\begin{definition}
A number $\lambda_{0}\in\sigma(L)$ is said to be an essential spectral
singularity (ESS) of $L$ if there exist $t_{0}\in\lbrack0,2\pi)$ and
$k\in\mathbb{Z}$ such that $\lambda_{0}=\lambda_{k}(t_{0})$ and $\frac
{1}{\alpha_{k}}$ is not integrable over $(t_{0}-\delta,t_{0}+\delta)$ for all
$\delta>0.$ Then $t_{0}$ is called a singular quasimomentum (SQ).
\end{definition}

It is clear that $\lambda_{0}=\lambda_{k}(t_{0})$ is ESS if and only is there
exists sequence of closed intervals $I(s)$ approaching $t_{0\text{ }}$such
that $\lambda_{k}(t)$ for $t\in I(s)$ is a simple eigenvalue and
\begin{equation}
\lim_{s\rightarrow\infty}\int\nolimits_{I(s)}\left\vert \alpha_{k}%
(t)\right\vert ^{-1}dt=\infty.
\end{equation}

It the similar way we define ESS at infinity.

\begin{definition}
We say that the operator $L$ has ESS at infinity if there exists sequence of
integers $k_{s}$ and sequence of closed subsets $I(s)$ of $[0,2\pi)\backslash
A(k_{s}),$where $A(k)$ is defined in Remark 2, such that
\begin{equation}
\lim_{s\rightarrow\infty}\int\nolimits_{I(s)}\left\vert \alpha_{k_{s}%
}(t)\right\vert ^{-1}dt=\infty.
\end{equation}

\end{definition}

It readily follows from the Definitions 2 and 4 that if $L$ has ESS at
infinity then it has spectral singularity at infinity too. However, the
existence of the spectral singularity at infinity does not imply that it has
ESS at infinity.

Note that it follows from (12) and Definitions 1 and 2 that the boundlessness
of $\frac{1}{\alpha_{k}}$ is the characterization of the spectral
singularities and the considerations of the spectral singularities play only
the crucial rule for the investigations of the spectrality of $L$. On the
other hand, the papers [1, 3, 14-16] show that the periodic differential
operators, in general, is not a spectral operator. Therefore to construct the
spectral expansion for the operator $L$ in the general case we need to
introduce the new concepts ESS and SQ connected with the nonintegrability of
$\frac{1}{\alpha_{k}}$, since $\frac{1}{\alpha_{k}}$ may have an integrable
boundlessness. In fact, to construct the spectral expansion we need to
consider the existence of the integral with respect to $t$ over $[0,2\pi)$ of
the function in (23) for almost all $x$.

Note also that by Remark 2 the function in (23) and $\frac{1}{\alpha_{k}}$ are
defined and continuous in $[0,2\pi)\backslash A(k),$ where $A(k)$ is a finite
set. Hence the Lebesque integral of these functions over $I\subset
\lbrack0,2\pi)$ and $I\backslash A(k)$ are the same. Now (12) and definitions
1 and 3 imply the following.

\begin{proposition}
The set of ESS is the subset of the set of spectral singularities and the set
of spectral singularities is the subset of the set of multiple eigenvalues of
$L_{t}$ for $t\in\lbrack0,2\pi)$.
\end{proposition}

In this paper we construct the spectral expansion for $f\in E$ by using the
concepts SQ, ESS and ESS at infinity. First (in Section 2) we consider the
case $n=2\mu+1$ which is simpler than $n=2\mu$. Then, in Section 3, we
investigate the complicated case $n=2\mu.$ Moreover, we prove that the
spectral decomposition has the elegant form (as in the self-adjoint case [2])
\begin{equation}
f(x)=\frac{1}{2\pi}\sum_{k\in\mathbb{Z}}\int\nolimits_{[0,2\pi)}a_{k}%
(t)\Psi_{k,t}(x)dt
\end{equation}
if and only if $L$ has no ESS and ESS at infinity. Note that this statement is
new for $n=2$ too and is the proof of the necessity of the introducing the
concepts SQ, ESS and ESS at infinity. Note also that, in general, the SQ, ESS
and ESS at infinity exist and hence (26) does not hold. That is why, it is
necessary to find a new form for the spectral expansion. We prove that it is
necessary to use the parenthesis in the spectral expansion series for the
general case. In this paper we try to use all and necessary factors that
effect to the spectral expansion of $L$ (see the conclusion at the end of the paper).

\section{Spectral Expansion for Odd Order}

In this section we consider the simply case $n=2\mu+1$ so that it helps to
read the complicated case $n=2\mu$. The main reason of the simplicity of the
odd case is that in the case the number of elements of $A\cap Q$ and the
number of multiple Bloch eigenvalues are finite (see Remark 1). Denote the
points of $A\cap\lbrack0,2\pi)$ by $t_{1}<t_{2}<...<t_{s}$ and introduce the
set
\begin{equation}
\mathbb{T}(v,j):=\left\{  k\in\mathbb{Z}:\lambda_{k}(t_{j})=\Lambda_{v}%
(t_{j})\right\}  ,
\end{equation}
where $\Lambda_{1}(t_{j}),$ $\Lambda_{2}(t_{j}),...,\Lambda_{s_{j}}(t_{j})$
are the different multiple eigenvalues of the operator $L_{t_{j}}.$ The set
$\mathbb{T}(v,j)$ is finite, since the multiplicity of $\Lambda_{v}(t_{j})$ is
finite. Moreover,
\begin{equation}
\mathbb{T}(v,j)\cap\mathbb{T}(m,j)=\emptyset,\text{ }\forall m\neq v
\end{equation}
due to $\Lambda_{v}(t_{j})\neq\Lambda_{m}(t_{j}).$ However, $\mathbb{T}%
(v,j)\cap\mathbb{T}(v,i)$ for $i\neq j$ and $\mathbb{T}(v,j)\cap
\mathbb{T}(m,i)$ for $i\neq j$ and $m\neq v$ must not to be the empty sets.

If $\Lambda_{v}(t_{j})$ is the $p$multiple eigenvalue of $L_{t_{j}},$ then it
is the $p$-multiple root of the equation $\Delta(\lambda,t_{j})=0.$ Therefore
using the implicit function theorem for (8) and then the properties of the
Grean function of $L_{t}$ we obtain the following proposition and theorem
which hold for the both (odd and even) cases.

\begin{proposition}
If $\Lambda_{v}(t_{j})$ is the eigenvalue of $L_{t_{j}}$ of multiplicity $p$
then there exist a disk $U(t_{j},\varepsilon_{j})=\{t\in\mathbb{C}:\left\vert
t-t_{j}\right\vert <\varepsilon_{j}\}$ and $r_{2}>r_{1}>0$ such that the
followings hold.

$(i)$ In $r_{1}$ neighborhood of $\Lambda_{v}(t_{j})$ the operators $L_{t}$
for $t\in U(t_{j},\varepsilon_{j})\backslash\{t_{j}\}$ has $p$ simple
eigenvalues and these eigenvalues are
\begin{equation}
\lambda_{k}(t)\text{ for }k\in\mathbb{T}(v,j).
\end{equation}

$(ii)$ In $r_{2}$ neighborhood of $\Lambda_{v}(t_{j})$ the operators $L_{t}$
for $t=t_{j}$ and $t\in U(t_{j},\varepsilon_{j})\backslash\{t_{j}\}$ have no
other eigenvalues than $\Lambda_{v}(t_{j})$ and (29) respectively.
\end{proposition}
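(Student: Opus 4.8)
The plan is to regard $\Delta(\lambda,t)$ as a function of the two complex variables $(\lambda,t)$ and to count its $\lambda$-zeros by the argument principle, controlling the dependence on $t$ through analyticity; this is what the phrase ``implicit function theorem and (12)'' points to. First I record the analytic input. For fixed $t$ the solutions $y_{j}(x,\rho)$ in (8), and hence the determinant (7), are entire in $\rho$, so $\Delta$ is entire in $\lambda=-\rho^{n}$; by (12) it is a polynomial in $e^{it}$ and therefore entire in $t$. Thus $\Delta(\lambda,t)$ is jointly analytic in a neighbourhood of $(\Lambda_{v}(t_{j}),t_{j})$, and the hypothesis that $\Lambda_{v}(t_{j})$ has multiplicity $p$ means precisely, as recorded in (35), that $\lambda\mapsto\Delta(\lambda,t_{j})$ has a zero of order exactly $p$ there.

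Next I fix the two radii and the disk. Since the nonzero entire function $\Delta(\cdot,t_{j})$ has isolated zeros, I choose $r_{2}>0$ so that $\Lambda_{v}(t_{j})$ is its only zero in $\{|\lambda-\Lambda_{v}(t_{j})|\le r_{2}\}$, and then fix any $r_{1}$ with $0<r_{1}<r_{2}$. On each of the circles $|\lambda-\Lambda_{v}(t_{j})|=r_{i}$ the function $\Delta(\cdot,t_{j})$ has positive minimum modulus, so by the joint continuity of $\Delta$ there is $\varepsilon_{j}\in(0,1)$ such that for all $t\in U(t_{j})$ the function $\Delta(\cdot,t)$ is zero-free on both circles. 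The integer $\frac{1}{2\pi i}\oint_{|\lambda-\Lambda_{v}(t_{j})|=r_{i}}\frac{\partial_{\lambda}\Delta(\lambda,t)}{\Delta(\lambda,t)}\,d\lambda$ is then continuous, hence constant, in $t\in U(t_{j})$; by Rouch\'e's theorem its value equals the number of zeros (with multiplicity) at $t=t_{j}$, namely $p$ for both $i=1,2$. Consequently $\Delta(\cdot,t)$ has exactly $p$ zeros in the open $r_{1}$-disk, none in the annulus $r_{1}\le|\lambda-\Lambda_{v}(t_{j})|<r_{2}$, and none on the outer circle. This is precisely assertion $(ii)$: inside the $r_{2}$-neighbourhood the only eigenvalues are the $p$ zeros just located, which at $t=t_{j}$ all coincide with $\Lambda_{v}(t_{j})$.

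It remains to prove simplicity, assertion $(i)$. Because $n=2\mu-1$, the set $A\cap Q$ is finite, so $t_{j}$ is isolated in $A$; shrinking $\varepsilon_{j}$ if needed (keeping $\varepsilon_{j}<1$ so that $U(t_{j})\subset Q$) I may assume $U(t_{j})\cap A=\{t_{j}\}$. Then every $t\in U(t_{j})\setminus\{t_{j}\}$ lies outside $A$, so all eigenvalues of $L_{t}$ are simple; in particular the $p$ zeros of $\Delta(\cdot,t)$ in the $r_{1}$-disk are simple roots, and being $p$ in number counted with multiplicity they are $p$ distinct points. Finally, since the global numbering $\lambda_{k}(t)$ is continuous on $Q$ (from [19]) and $\lambda_{k}(t_{j})=\Lambda_{v}(t_{j})$ exactly for $k\in\mathbb{T}(v,j)$, after a further reduction of $\varepsilon_{j}$ these, and only these, branches enter the $r_{2}$-disk, so the $p$ eigenvalues are exactly the set (37). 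The one genuinely delicate point is the simplicity for complex $t$: it rests entirely on $t_{j}$ being isolated in $A$. If one preferred to avoid invoking finiteness of $A\cap Q$, the same conclusion follows by noting that the discriminant of the degree-$p$ Weierstrass polynomial factor of $\Delta$ furnished by (12) is analytic in $t$ and not identically zero (it is nonzero for real $t$ near $t_{j}$), hence vanishes only on a discrete set.
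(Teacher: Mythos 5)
Your proof is correct, and it is best described as the rigorous completion of what the paper merely asserts: the paper gives no proof of this proposition beyond the single sentence that it ``immediately follows from implicit function theorem and (12)''. Your route --- joint analyticity of $\Delta(\lambda,t)$ from (12), zero-freeness of $\Delta(\cdot,t_{j})$ on two circles, and constancy in $t$ of the integer-valued argument-principle integral --- is in fact the tool that the paper's appeal to the implicit function theorem is standing in for; note that the implicit function theorem cannot be applied literally at $(\Lambda_{v}(t_{j}),t_{j})$, since $\partial\Delta/\partial\lambda$ vanishes there (that is what multiplicity $p>1$ means), so Rouch\'e/Weierstrass preparation is the correct instrument and your write-up supplies it. You also treat two points the paper's one-liner passes over in silence: the identification of the $p$ zeros in the $r_{1}$-disk with precisely the branches $\lambda_{k}(t)$, $k\in\mathbb{T}(v,j)$ (continuity of the finitely many branches through $\Lambda_{v}(t_{j})$ plus the exact count forces this), and, most importantly, the simplicity of the perturbed eigenvalues for complex $t\neq t_{j}$, which no counting argument can give and which you correctly derive from the finiteness of $A\cap Q$ in the odd-order case, so that $U(t_{j})\cap A=\{t_{j}\}$ after shrinking $\varepsilon_{j}$. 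Your alternative argument via the discriminant of the Weierstrass factor is also sound (its nonvanishing at some nearby $t$ follows from the countability of $A$), and it has the advantage of surviving in the even-order setting of Section 3, where $A\cap Q$ is infinite but the analogous local statement is still needed near the finitely many points of $A\cap Q_{h}$.
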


Now using this proposition we prove the following theorem that helps to
replace the integral over $l$ by the integral over $[0,2\pi)$ for $f\in E,$
where $E$ is defined in Condition 1.

\begin{theorem}
If $f\in E,$ then for each $x\in(-\infty,\infty)$ the expression%
\begin{equation}
\sum_{k\in\mathbb{T}(v,j)}a_{k}(t)\Psi_{k,t}(x)
\end{equation}
analytically depend on $t$ in $U(t_{j},\varepsilon_{j})\backslash\{t_{j}\}$,
is integrable over $(t_{j}-\varepsilon,t_{j}+\varepsilon)$ and
\begin{equation}
\sum_{k\in\mathbb{T}(v,j)}\int\nolimits_{\gamma(t_{j},\varepsilon)}%
a_{k}(t)\Psi_{k,t}(x)dt=\int\limits_{(t_{j}-\varepsilon,t_{j}+\varepsilon
)}\sum_{k\in\mathbb{T}(v,j)}a_{k}(t)\Psi_{k,t}(x)dt,\text{ }\forall
j=1,2,...,s,
\end{equation}
where $\ $ $\varepsilon<\min\left\{  \beta,\varepsilon_{1},\varepsilon
_{2},...,\varepsilon_{s}\right\}  ,$ $\gamma(t_{j},\varepsilon)=\{t\in
\mathbb{C}:\left\vert t-t_{j}\right\vert =\varepsilon,\operatorname{Im}%
t\geq0\},$ $\varepsilon_{j}$ and $a_{k}(t)\Psi_{k,t}(x)$ are defined in
Proposition 2 and (23) respectively and $\beta$ is a positive constant defined
in Condition 1 for $f.$ If $\lambda_{k}(t_{0})$ is a simple eigenvalue, then
$a_{k}(t)\Psi_{k,t}(x)$ analytically depend on $t$ in some neighborhood of
$t_{0}.$
\end{theorem}

\begin{proof}
Let $C$ be the circle with center $\Lambda_{v}(t_{j})$ and radius $r,$ where
$r_{1}<r<$ $r_{2}$ and the numbers $r_{1}$ and $r_{2}$ are defined in
Proposition 2. Consider the total projection
\begin{equation}
T(x,t):=\int_{C}A(x,\lambda,t)d\lambda,
\end{equation}
where
\begin{equation}
A(x,\lambda,t):=\int\nolimits_{0}^{1}G(x,\xi,\lambda,t)f_{t}(\xi)d\xi,
\end{equation}
$f_{t}$ is defined in (15) and $G(x,\xi,\lambda,t)$ is the Green function of
the operator $L_{t}.$ It is well-known that $G(x,\xi,\lambda,t)$ is defined by
formula (see [6] pages 36 and 37)%
\begin{equation}
G(x,\xi,\lambda,t)=\frac{H(x,\xi,\lambda,t)}{\Delta(\lambda,t)},
\end{equation}
where $H(x,\xi,\lambda,t)$ is the $(n+1)\times(n+1)$ determinant defined as
follows
\begin{equation}
H(x,\xi,\lambda,t)=\left\vert
\begin{array}
[c]{ccccc}%
y_{1}(x) & y_{2}(x) & \cdot\cdot\cdot & y_{n}(x) & g(x,\xi)\\
U_{1}(y_{1}) & U_{1}(y_{2}) & \cdot\cdot\cdot & U_{1}(y_{n}) & U_{1}(g)\\
U_{2}(y_{1}) & U_{2}(y_{2}) & \cdot\cdot\cdot & U_{2}(y_{n}) & U_{2}(g)\\
\cdot\cdot\cdot & \cdot\cdot\cdot & \cdot\cdot\cdot & \cdot\cdot\cdot &
\cdot\cdot\cdot\\
U_{n}(y_{1}) & U_{n}(y_{2}) & \cdot\cdot\cdot & U_{n}(y_{n}) & U_{n}(g)
\end{array}
\right\vert ,
\end{equation}%
\begin{equation}
g(x,\xi)=\pm\frac{1}{2W(\xi)}%
\begin{vmatrix}
y_{1}(x) & y_{2}(x) & \cdots & y_{n}(x)\\
y_{1}^{(n-2)}(\xi) & y_{2}^{(n-2)}(\xi) & \cdots & y_{n}^{(n-2)}(\xi)\\
y_{1}^{(n-3)}(\xi) & y_{2}^{(n-3)}(\xi) & \cdots & y_{n}^{(n-3)}(\xi)\\
. & . & \cdots & .\\
y_{1}(\xi) & y_{2}(\xi) & ... & y_{n}(\xi)
\end{vmatrix}
\end{equation}
and $W(\xi)$ is the Wronskian of the solutions $y_{1},y_{2},\ldots,y_{n}.$ In
(36) the positive sign being taken if $x>\xi$ and the negative sign if
$x<\xi.$

By Proposition 2 the circle $C$ encloses only the eigenvalues (29) and lies in
the resolvent sets of $L_{t}$ for $t\in U(t_{j},\varepsilon_{j}).$ By (8),
$\Delta(\lambda,t)$ is continuous in the compact $C\times\overline
{U(t_{j},\varepsilon)}$ and hence there exists a positive constant $c$ such
that
\begin{equation}
\left\vert \Delta(\lambda,t)\right\vert \geq c,\text{ }\forall(\lambda,t)\in
C\times U(t_{j},\varepsilon_{j}).
\end{equation}
Therefore using Condition 1$(i)$ and Lebesgue dominated convergence theorem
from (32)-(37) we obtain the following results.

$(i)$ There exists $M_{v,j}$ such that $\left\vert A(x,\lambda,t)\right\vert
\leq M_{v,j}$ for all $(x,\lambda,t)\in\lbrack0,1]\times C\times
U(t_{j},\varepsilon_{j})$.

$(ii)$ $A(x,\lambda,t)$ analytically depend on $t$ in $U(t_{j},\varepsilon
_{j})$ for $(x,\lambda)\in\lbrack0,1]\times C.$

$(iii)$ $A(x,\lambda,t)$ and $\frac{\partial A}{\partial t}(x,\lambda,t)$
continuously depend on $(\lambda,t)$ in $C\times U(t_{j},\varepsilon_{j})$ for
$x\in\lbrack0,1].$

$(iiii)$ $T(x,t)$ analytically depend on $t$ in $U(t_{j},\varepsilon_{j})$ for
$x\in\lbrack0,1]$ and $\left\vert T(x,t)\right\vert \leq4\pi M_{v,j}r$ for all
$(x,t)\in\lbrack0,1]\times U(t_{j},\varepsilon_{j})$.

The proof of $(i)-(iii)$ follows from (33)-(37) and the proof of $(iiii)$
follows from (32) and $(i)-(iii).$ Since inside of $C$ the operator $L_{t}$
for $t\in U(t_{j},\varepsilon_{j})\backslash\{t_{j}\}$ has $p$ simple
eigenvalues (29) we have%
\begin{equation}
T(x,t)=\sum_{k\in\mathbb{T}(v,j)}a_{k}(t)\Psi_{k,t}(x),\text{ }\forall t\in
U(t_{j},\varepsilon_{j})\backslash\{t_{j}\}.
\end{equation}
It with $(iiii)$ implies that (30) analytically depend on $t$ in
$U(t_{j},\varepsilon_{j})\backslash\{t_{j}\}$ and is integrable over
$(t_{j}-\varepsilon,t_{j}+\varepsilon)$ for all $x.$ On the other hand, again
by $(iiii),$ we have
\begin{equation}
\int\nolimits_{\gamma(t_{j},\varepsilon)}T(x,t)dt=\int\nolimits_{(t_{j}%
-\varepsilon,t_{j}+\varepsilon)}T(x,t)dt.
\end{equation}
Moreover, by Proposition 2, for $t\in\gamma(t_{j},\varepsilon)$ the
eigenvalues (29) are simple and hence $a_{k}(t)\Psi_{k,t}(x)$ continuously
depend on $t$ in $\gamma(t_{j},\varepsilon)$ for each $k\in\mathbb{T}(v,j)$
and $x\in\lbrack0,1].$ Therefore
\begin{equation}
\int\limits_{\gamma(t_{j},\varepsilon)}\sum_{k\in\mathbb{T}(v,j)}a_{k}%
(t)\Psi_{k,t}(x)dt=\sum_{k\in\mathbb{T}(v,j)}\int\limits_{\gamma
(t_{j},\varepsilon)}a_{k}(t)\Psi_{k,t}(x)dt.
\end{equation}
Thus (31) follows from (38)-(40). It remains to note that $T(x,t)=a_{k}%
(t)\Psi_{k,t}(x)$ if $\lambda_{k}(t)$ is a simple eigenvalue
\end{proof}

Let $h$ and $\varepsilon$ be positive numbers such that $-h\notin A,$
$t_{s}<2\pi-h<2\pi$ and
\begin{equation}
2\varepsilon<\min_{j=1,2,...,s-1}\{t_{1}+h,t_{j+1}-t_{j},2\pi-h-t_{s}\},
\end{equation}
that is, $\varepsilon$ is less than half of the length of the intervals
$(-h,t_{1}),$ $(t_{1},t_{2}),...,$ $(t_{s-1},t_{s})$ and $(t_{s},2\pi-h)$ (see
(27) for $t_{j}$). Besides, we assume that $\varepsilon$ satisfies the
inequality in Theorem 1. Let $l(\varepsilon)$ be a curve joining the points
$-h$ and $2\pi-h,$ passing over the points $t_{1}<t_{2}<...<t_{s}$ and
consisting of the intervals
\begin{equation}
\lbrack-h,t_{1}-\varepsilon),(t_{1}+\varepsilon,t_{2}-\varepsilon
),...,(t_{s-1}+\varepsilon,t_{s}-\varepsilon),(t_{s}+\varepsilon,2\pi-h]
\end{equation}
and semicircles $\gamma(t_{j},\varepsilon)$ for $j=1,2,...,s$ defined in
Theorem 1. Thus
\begin{equation}
l(\varepsilon)=\left(
%TCIMACRO{\tbigcup \nolimits_{j=1}^{s}}%
%BeginExpansion
{\textstyle\bigcup\nolimits_{j=1}^{s}}
%EndExpansion
\gamma(t_{j},\varepsilon)\right)  \cup\left(
%TCIMACRO{\tbigcup \nolimits_{j=1}^{s-1}}%
%BeginExpansion
{\textstyle\bigcup\nolimits_{j=1}^{s-1}}
%EndExpansion
(t_{j}+\varepsilon,t_{j+1}-\varepsilon)\right)  \cup\lbrack-h,t_{1}%
-\varepsilon)\cup(t_{s}+\varepsilon,2\pi-h].
\end{equation}

The following statements signify the reason of the simplicity of the odd case.

\begin{proposition}
Let $n=2\mu+1$, $f\in E$ and $x\in(-\infty,\infty).$ Then

$(a)$ For each $k\in\mathbb{Z}$, $a_{k}(t)\Psi_{k,t}(x)$ is integrable on the
sets in (42) and (43).

$(b)$ The curve $l(\varepsilon)$ can be chosen so that it and the open domain
$D(\varepsilon)$ enclosed by the curve $l(\varepsilon)\cup\lbrack-h,2\pi-h]$
does not contain the point of the set $A$. Moreover, if $\lambda_{k}(t_{j})$
is a simple eigenvalue for $j=1,2,...,s,$ then
\begin{equation}
\int_{l(\varepsilon)}a_{k}(t)\Psi_{k,t}(x)dt=\int_{[0,2\pi)}a_{k}(t)\Psi
_{k,t}(x)dt.
\end{equation}

$(c)$ The operator $L$ has at most finite number SQ and ESS and has no
spectral singularity and ESS at infinity.
\end{proposition}

\begin{proof}
The proof of $(a)$ follows from Theorem 1.

$(b)$ Since $A\cap Q$ is a finite set, one can choose $l(\varepsilon)$ so that
it and $D(\varepsilon)$ does not contain the point of the set $A$. If in
addition $\lambda_{k}(t_{j})$ is a simple eigenvalue for $j=1,2,...,s,$ then
$\lambda_{k}(t)$ is a simple eigenvalue for all $t\in\overline{D(\varepsilon
)}.$ Therefore, $a_{k}(t)\Psi_{k,t}(x)$ analytically depend on $t$ in
$\overline{D(\varepsilon)}$ for all $x.$ Thus using it and the equality
\ $a_{k}(t+2\pi)\Psi_{k,t+2\pi}(x)=a_{k}(t)\Psi_{k,t}(x)$ we obtain (44).

$(c)$ Using (7) and taking into account that under condition (2) the
eigenfunction $\Psi_{k,t}^{\ast}$ of the adjoint operator $L_{t}^{\ast}$ for
$t\in\lbrack0,2\pi)$ also satisfies (7) we obtain \
\begin{equation}
\alpha_{k}(t)=(\Psi_{k,t}^{\ast},\Psi_{k,t})=1+O(k^{-1})
\end{equation}
as $k\rightarrow\infty.$ It is uniform with respect to $t$ in $[0,2\pi).$
Therefore the proof of $(c)$ follows from (12) and Definitions 1-4
\end{proof}

Now we try to replace the semicircle $\gamma(t_{j},\varepsilon)$ by the
interval $(t_{j}-\varepsilon,t_{j}+\varepsilon),$ when $\lambda_{k}(t_{j})$ is
not a simple eigenvalue, that is, we prove the equality
\begin{equation}
\int\nolimits_{\gamma(t_{j},\varepsilon)}a_{k}(t)\Psi_{k,t}(x)dt=\int
\nolimits_{(t_{j}-\varepsilon,t_{j}+\varepsilon)}a_{k}(t)\Psi_{k,t}(x)dt
\end{equation}
for some value of $k$. More precisely, we determine whether (46) and (44) hold
or not in order to replace in (22) the curve $l(\varepsilon)$ by $[-h,2\pi-h]$
and get a spectral expansion for $L.$

\begin{remark}
Note that by Theorem 1, for each $x$ the expression in (30) is integrable on
$(t_{j}-\varepsilon,t_{j}+\varepsilon)$. However, in general, the summands of
(30) for same values of $k$ may became nonintegrable. We say that the set%
\begin{equation}
\left\{  a_{k}(t)\Psi_{k,t}(x):k\in\mathbb{T}(v,j)\right\}
\end{equation}
is a bundle corresponding to the multiple eigenvalue $\Lambda_{v}(t_{j}),$
where $\mathbb{T}(v,j)$ is defined in (27). If $\Lambda_{v}(t_{j})$ is not
ESS, then it follows \ from Definition 3 and Theorem 9 that all elements of
the bundle (47) are integrable on $(t_{j}-\varepsilon,t_{j}+\varepsilon)$ for
all $x$. Therefore by (31) we have
\begin{equation}
\sum_{k\in\mathbb{T}(v,j)}\int\nolimits_{\gamma(t_{j},\varepsilon)}%
a_{k}(t)\Psi_{k,t}(x)dt=\sum_{k\in\mathbb{T}(v,j)}\int\nolimits_{(t_{j}%
-\varepsilon,t_{j}+\varepsilon)}a_{k}(t)\Psi_{k,t}(x)dt,
\end{equation}
since $\mathbb{T}(v,j)$ consist of the finite number of indices.
\end{remark}

Now using (22), (43) and (48) we obtain

\begin{theorem}
If $n=2\mu-1$ and $L$ has no ESS or equivalently has no SQ, then for $f\in E$
spectral expansion (26) holds, where the series converges in the norm of
$L_{2}(a,b)$ for every $a,b\in\mathbb{R}.$
\end{theorem}

Now we consider the case when $L$ has the ESS. By Proposition 1, the set of
the ESS is the subset of the set of multiple eigenvalues
\begin{equation}
\left\{  \Lambda_{v}(t_{j}):j=1,2,...,s;\text{ }v=1,2,...,s_{j}\right\}
\end{equation}
and (49) is a finite set. Therefore, for the simplicity of the notation and
without loss of generality, we re-numarete the elements of (49) so that
\begin{equation}
\left\{  t_{1},t_{2},...,t_{m}\right\}  \text{ and }\left\{  \Lambda_{v}%
(t_{j}):j=1,2,...,m;\text{ }v=1,2,...,m_{j}\right\}  ,
\end{equation}
where $m\leq s$ and $m_{j}\leq s_{j},$ denote the set of SQ and ESS respectively.

If $\Lambda_{v}(t_{j})$ is an ESS, then for some values of $k\in
\mathbb{T}(v,j)$ the function $a_{k}(t)\Psi_{k,t}(x)$ for some $f$ and $x$ is
nonintegrable on $(t_{j}-\varepsilon,t_{j}+\varepsilon),$ while some of
elements of the bundle (47) may be integrable and the total sum of elements of
(47) is integrable due to the cancellations of the nonintegrable terms. At
least two element of the bundle must be nonintegrable in order to do the
cancellations. In fact, we may huddle together only the nonintegrable elements
of (47). To do this handling we prove the following proposition.

\begin{proposition}
Let $\Lambda_{v}(t_{j})$ be an ESS, and $\mathbb{S}(v,j)$ be the subset of
$\mathbb{T}(v,j)$ such that for $k\in\mathbb{S}(v,j)$ the function $\frac
{1}{\alpha_{k}}$ is nonintegrable in $(t_{j}-\varepsilon,t_{j}+\varepsilon).$
Then for $f\in E$ the expression
\begin{equation}
S_{v,j}(x,t):=\sum\nolimits_{k\in\mathbb{S}(v,j)}a_{k}(t)\Psi_{k,t}(x)
\end{equation}
is integrable over $(t_{j}-\varepsilon,t_{j}+\varepsilon)$,
\begin{equation}
\sum_{k\in\mathbb{T}(v,j)}\int\nolimits_{\gamma(t_{j},\varepsilon)}%
a_{k}(t)\Psi_{k,t}dt=
\end{equation}%
\[
\sum_{k\in\mathbb{T}(v,j)\backslash\mathbb{S}(v,j)}\int\nolimits_{(t_{j}%
-\varepsilon,t_{j}+\varepsilon)}a_{k}(t)\Psi_{k,t}dt+\int\nolimits_{(t_{j}%
-\varepsilon,t_{j}+\varepsilon)}\sum_{k\in\mathbb{S}(v,j)}a_{k}(t)\Psi
_{k,t}dt
\]
and
\begin{equation}
\int\limits_{(t_{j}-\varepsilon,t_{j}+\varepsilon)}\sum_{k\in\mathbb{S}%
(v,j)}a_{k}(t)\Psi_{k,t}(x)dt=\lim_{\delta\rightarrow0}\sum_{k\in
\mathbb{S}(v,j)}\int\limits_{\delta<\left\vert t-t_{j}\right\vert
\leq\varepsilon}a_{k}(t)\Psi_{k,t}(x)dt.
\end{equation}

\end{proposition}

\begin{proof}
By definition of $\mathbb{S}(v,j)$ and Theorem 9, $a_{k}(t)\Psi_{k,t}(x)$ is
integrable on $(t_{j}-\varepsilon,t_{j}+\varepsilon)$ for $k\in\mathbb{T}%
(v,j)\backslash\mathbb{S}(v,j)$ and $x\in(-\infty,\infty).$ Thus taking into
account that (30) is integrable on $(t_{j}-\varepsilon,t_{j}+\varepsilon)$ and
using (31) we obtain that $S_{v,j}(x,t)$ is also integrable on $(t_{j}%
-\varepsilon,t_{j}+\varepsilon)$ and (52) holds. Now (53) follows from the
absolute continuity of the integral
\end{proof}

Introduce the notations
\begin{equation}
\mathbb{S}_{j}=%
%TCIMACRO{\tbigcup \nolimits_{v=1}^{m_{j}}}%
%BeginExpansion
{\textstyle\bigcup\nolimits_{v=1}^{m_{j}}}
%EndExpansion
\mathbb{S}(v,j),\text{ }\mathbb{S}=%
%TCIMACRO{\tbigcup \nolimits_{j=1}^{m}}%
%BeginExpansion
{\textstyle\bigcup\nolimits_{j=1}^{m}}
%EndExpansion
\mathbb{S}_{j},\text{ }\mathbb{T}_{j}=%
%TCIMACRO{\tbigcup \nolimits_{v=1}^{s_{j}}}%
%BeginExpansion
{\textstyle\bigcup\nolimits_{v=1}^{s_{j}}}
%EndExpansion
\mathbb{T}(v,j),\text{ }\mathbb{T}=%
%TCIMACRO{\tbigcup \nolimits_{j=1}^{s}}%
%BeginExpansion
{\textstyle\bigcup\nolimits_{j=1}^{s}}
%EndExpansion
\mathbb{T}_{j},
\end{equation}%
\begin{equation}
S_{j}(x,t)=\sum\nolimits_{k\in\mathbb{S}_{j}}a_{k}(t)\Psi_{k,t}(x),\text{
}S(x,t)=\sum\nolimits_{k\in\mathbb{S}}a_{k}(t)\Psi_{k,t}(x).
\end{equation}
Here $\mathbb{S}$\ and $\mathbb{T}$ are finite subsets of $\mathbb{Z}$, since
$\mathbb{S}\subset\mathbb{T}$ and the number of elements of $\mathbb{T}(v,j)$
is equal to the multiplicity of the eigenvalue $\Lambda_{v}(t_{j})$ of the
operator $L_{t_{j}}.$

The definitions of $\mathbb{S}(v,j),$ $\mathbb{S}_{j},$ $\mathbb{T}%
_{j},\ \mathbb{S}$\ and $\mathbb{T}$ immediately imply the following.

\begin{proposition}
$(a)$The relations $k\in\mathbb{S}$\ and $k\in\mathbb{T}\backslash\mathbb{S}$
hold respectively if and only if $\frac{1}{\alpha_{k}}$ is nonintegrable and
integrable over $[0,2\pi).$

$(b)$ $\mathbb{S}_{j}$ and $\mathbb{S}$ are the set of all $k$ for which
$\frac{1}{\alpha_{k}}$ is nonintegrable over $(t_{j}-\varepsilon
,t_{j}+\varepsilon)$ and $[0,2\pi)$ respectively.

$(c)$ $\mathbb{T}_{j}$ and $\mathbb{T}$ are the set of all $k$ for which
$\lambda_{k}(t)$ is a multiple eigenvalue for $t=t_{j}$ and for some
$t\in\left\{  t_{1},t_{2},...,t_{s}\right\}  $ respectively.
\end{proposition}

To replace the circles $\gamma(t_{j},\varepsilon)$ by the intervals
$(t_{j}-\varepsilon,t_{j}+\varepsilon)$ for $j=1,2,...,s$ and hence to get
integrals over $[0,2\pi)$ instead of $l(\varepsilon)$ (see (43)), that is, to
obtain a spectral expansion from (22) we divide the set $\mathbb{Z}$ into
three pairwise disjoint subsets $\mathbb{Z}\backslash\mathbb{T},$
$\mathbb{T}\backslash\mathbb{S},$ and $\mathbb{S}$ and consider separately the
integrals of $a_{k}(t)\Psi_{k,t}(x)$ over $l(\varepsilon)$ when the index $k$
varies through these subsets. For this first we prove the following.

\begin{lemma}
Let $f\in E$ and $x\in(-\infty,\infty).$

$(a)$ If $k\in\mathbb{Z}\backslash\mathbb{T}$ then (44) holds.

$(b)$ The following equality holds
\begin{equation}
\sum_{k\in\mathbb{T}}\int\limits_{l(\varepsilon)}a_{k}(t)\Psi_{k,t}%
(x)dt=\int\limits_{[0,2\pi)}\sum_{k\in\mathbb{T}}a_{k}(t)\Psi_{k,t}(x)dt.
\end{equation}

$(c)$ If $k\in\mathbb{T}\backslash\mathbb{S}$, then $a_{k}(t)\Psi_{k,t}(x)$ is
integrable with respect to $t$ over $[0,2\pi).$

$(d)$ The expression $S(x,t),$ defined in (55) is integrable with respect to
$t$ over $[0,2\pi)$.
\end{lemma}

\begin{proof}
$(a)$ By Proposition 5$(c)$, if $k\in\mathbb{Z}\backslash\mathbb{T}$, then
\ $\lambda_{k}(t_{j})$ is a simple eigenvalue for $j=1,2,...,s.$ Therefore
(44) holds due to Proposition 3$(b)$.

$(b)$ Since $\mathbb{T}_{j}$ is the union of the pairwise disjoint sets
$\mathbb{T}(1,j),$ $\mathbb{T}(2,j),...,\mathbb{T}(s_{j},j)$ (see (28)), by
(31) we have
\begin{equation}
\sum_{k\in\mathbb{T}_{j}}\int\limits_{\gamma(t_{j},\varepsilon)}a_{k}%
(t)\Psi_{k,t}(x)dt=\int\limits_{(t_{j}-\varepsilon,t_{j}+\varepsilon)}%
\sum_{k\in\mathbb{T}_{j}}a_{k}(t)\Psi_{k,t}(x)dt.
\end{equation}
On the other hand, by Proposition 5$(c)$ if $k\in\mathbb{T}\backslash
\mathbb{T}_{j}$ then $\lambda_{k}(t)$ is a simple eigenvalue in $U(t_{j}%
,\varepsilon_{j}).$ It implies that $a_{k}(t)\Psi_{k,t}(x)$ analytically
depend on $t$ in $U(t_{j},\varepsilon_{j})$ and hence
\[
\int\nolimits_{\gamma(t_{j},\varepsilon)}a_{k}(t)\Psi_{k,t}(x)dt=\int
\nolimits_{(t_{j}-\varepsilon,t_{j}+\varepsilon)}a_{k}(t)\Psi_{k,t}(x)dt
\]
for $k\in\mathbb{T}\backslash\mathbb{T}_{j}.$ Therefore in (57) one can
replace $\mathbb{T}_{j}$ by $\mathbb{T}$ and get
\begin{equation}
\sum_{k\in\mathbb{T}}\int\limits_{\gamma(t_{j},\varepsilon)}a_{k}(t)\Psi
_{k,t}(x)dt=\int\limits_{(t_{j}-\varepsilon,t_{j}+\varepsilon)}\sum
_{k\in\mathbb{T}}a_{k}(t)\Psi_{k,t}(x)dt
\end{equation}
for all $j=1,2,...,s.$ Now, taking into account that the sets in (42) does not
contain the elements of $A$ we obtain that $a_{k}(t)\Psi_{k,t}(x)$ for all
$k\in\mathbb{Z}$ is integrable over the sets in (42). Then using (43) we get
(56). The proof of $(c)$ follows from Proposition 5$(b)$ and Theorem 9.

$(d)$ Since $S(x,t)$ is integrable over the sets in (42), it is enough to show
that \ $S(x,t)$ is an integrable function on $(t_{j}-\varepsilon
,t_{j}+\varepsilon)$ for arbitrary $j.$ Using (28) and taking into account
that $\mathbb{S}(v,j)\subset\mathbb{T}(v,j)$ for all $v$ we conclude that
$\mathbb{S}(v,j)\cap\mathbb{S}(i,j)=\emptyset$ for all $i\neq v$ . Therefore
\[
S_{j}(x,t)=\sum\nolimits_{v=1}^{m_{j}}S_{v,j}(x,t)
\]
and by Proposition 4, $S_{j}(x,t)$ is a integrable over $(t_{j}-\varepsilon
,t_{j}+\varepsilon).$ Now consider the function
\[
S(x,t)-S_{j}(x,t)=\sum\nolimits_{k\in\mathbb{S}\backslash\mathbb{S}_{j}}%
a_{k}(t)\Psi_{k,t}(x).
\]
By Proposition 5$(b)$, if $k\notin\mathbb{S}_{j}$, then $\frac{1}{\alpha_{k}}$
and hence by Theorem 9 $a_{k}(t)\Psi_{k,t}(x)$ is integrable in $(t_{j}%
-\varepsilon,t_{j}+\varepsilon)$ for all $x.$ Thus $S_{j}(x,t)$ and
$S(x,t)-S_{j}(x,t)$ and hence $S(x,t)$ are integrable in $(t_{j}%
-\varepsilon,t_{j}+\varepsilon)$ for all $j$
\end{proof}

Now we ready to prove the following.

\begin{theorem}
If $L$ has the singular quasimomenta $t_{1},t_{2},...,t_{m}$, then for each
$f\in E$ the following spectral expansion holds
\begin{equation}
f=\frac{1}{2\pi}\int_{[0,2\pi)}\left(
%TCIMACRO{\tsum \limits_{k\in\mathbb{S}}}%
%BeginExpansion
{\textstyle\sum\limits_{k\in\mathbb{S}}}
%EndExpansion
a_{k}(t)\Psi_{k,t}(x)\right)  dt+\frac{1}{2\pi}\sum_{k\in\mathbb{Z}%
\backslash\mathbb{S}}\int_{[0,2\pi)}a_{k}(t)\Psi_{k,t}dt,
\end{equation}
where $\mathbb{S}$ consist of finite number of indices and
\begin{equation}
\int_{\lbrack0,2\pi)}%
%TCIMACRO{\tsum \limits_{k\in\mathbb{S}}}%
%BeginExpansion
{\textstyle\sum\limits_{k\in\mathbb{S}}}
%EndExpansion
a_{k}(t)\Psi_{k,t}(x)dt=\lim_{\delta\rightarrow0}\sum_{k\in\mathbb{S}}%
\int\nolimits_{I(\delta)}a_{k}(t)\Psi_{k,t}dt,
\end{equation}
$I(\delta)=[-h,2\pi-h]\backslash%
%TCIMACRO{\tbigcup \nolimits_{j=1}^{m}}%
%BeginExpansion
{\textstyle\bigcup\nolimits_{j=1}^{m}}
%EndExpansion
(t_{j}-\delta,t_{j}+\delta),$ that is, $I(\delta)$ is obtained from
$[-h,2\pi-h]$ by deleting the $\delta\in(0,\varepsilon)$ neighborhood of the
SQ. The series in (59) converges in the norm of $L_{2}(a,b)$ for every
$a,b\in\mathbb{R}$.
\end{theorem}

\begin{proof}
Using Lemma 1$(a)$ and (22), we obtain
\begin{equation}
f(x)=\frac{1}{2\pi}\sum_{k\in\mathbb{T}}\int\nolimits_{l(\varepsilon)}%
a_{k}(t)\Psi_{k,t}(x)dt+\frac{1}{2\pi}\sum_{k\in\mathbb{Z}\backslash
\mathbb{T}}\int_{[0,2\pi)}a_{k}(t)\Psi_{k,t}(x)dt,
\end{equation}
where $\mathbb{T}$ consist of finite number of indices. For the first
summation in the right side of (61) using (56) and taking into account Lemma
1$(c)$ we get (59).

It is clear that for each $k\in\mathbb{S}$ and for each fixed $\delta
\in(0,\varepsilon)$ the function $a_{k}(t)\Psi_{k,t}$ is integrable with
respect to $t$ on $I(\delta)$. Therefore for any $\delta\in(0,\varepsilon)$ we
have
\begin{equation}
\int_{\lbrack-h,2\pi-h]}S(x,t)dt=\sum_{k\in\mathbb{S}}\int\nolimits_{I(\delta
)}a_{k}(t)\Psi_{k,t}(x)dt+\int\nolimits_{[-h,2\pi-h]\backslash I(\delta
)}S(x,t)dt,
\end{equation}
where $S(x,t)$ is defined in (55). Since the measure of $[-h,2\pi-h]\backslash
I(\delta)$ tends to zero as $\delta\rightarrow0$ and $S(x,t)$ is an integrable
function in $[-h,2\pi-h]$ (see Lemma 1$(d)$ and (45)) the last integral in
(62) tends to zero as $\delta\rightarrow0$. Therefore (60) follows from (62)
\end{proof}

\begin{remark}
In the spectral expansion theorems (see Theorem 3) we prefer to use the
integrals of $a_{k}(t)\Psi_{k,t}(x)$ with respect to $t$ over $[0,2\pi),$
since the set $\left\{  \lambda_{k}(t):t\in\lbrack0,2\pi)\right\}  $ is the
traditional $k$th band $\Gamma_{k}$ (see Remark 2). Hence in (59) we have sum
of the integrals over the bands which is natural for the spectral expansion of
the periodic differential operators. Note that if for the spectral expansion
we wish to write the integrals over $[0,2\pi),$ then the huddling over
$\mathbb{S}$ (see (59) and (60)), in general, is necessary and one can not
divide the set $\mathbb{S}$ into two disjoint subset $\mathbb{S}^{\prime}$ and
$\mathbb{S}\backslash\mathbb{S}^{\prime}$ such that
\begin{equation}%
%TCIMACRO{\dsum \nolimits_{k\in\mathbb{S}^{\prime}}}%
%BeginExpansion
{\displaystyle\sum\nolimits_{k\in\mathbb{S}^{\prime}}}
%EndExpansion
a_{k}(t)\Psi_{k,t}(x)\text{ }\And\text{ }%
%TCIMACRO{\dsum \nolimits_{k\in\mathbb{S}\backslash\mathbb{S}^{\prime}}}%
%BeginExpansion
{\displaystyle\sum\nolimits_{k\in\mathbb{S}\backslash\mathbb{S}^{\prime}}}
%EndExpansion
a_{k}(t)\Psi_{k,t}(x)
\end{equation}
are integrable over $[0,2\pi)$. Indeed, if $\mathbb{S}_{1}=\left\{
1,2\right\}  ,$ $\mathbb{S}_{2}=\left\{  2,3\right\}  ,...,\mathbb{S}%
_{s}=\left\{  s,s+1\right\}  ,$ then by (54), Proposition 5$(b)$ and Theorem
9, $\mathbb{S}=\left\{  1,2,...,s+1\right\}  $ and $a_{k}(t)\Psi_{k,t}(x)$ is
nonintegrable with respect to $t$ over $[0,2\pi)$ for $k=1,2,...,s+1$ and for
some $f\in E$, $x\in\lbrack0,1].$ It is clear that for any proper subset
$\mathbb{S}^{\prime}$ of $\mathbb{S}$ the expressions in (63) are not
integrable over $[0,2\pi).$

However, if we consider the integral over $[0,2\pi)$ as sum of integrals over
$I(\delta)$ and

$(t_{j}-\delta,t_{j}+\delta)$ for $j=1,2,...,m,$ where $I(\delta)$ is defined
in Theorem 3, then the first integral in (59) can be written in the form
\begin{equation}
\int\limits_{\lbrack0,2\pi)}%
%TCIMACRO{\dsum \limits_{k\in\mathbb{S}}}%
%BeginExpansion
{\displaystyle\sum\limits_{k\in\mathbb{S}}}
%EndExpansion
a_{k}(t)\Psi_{k,t}dt=\sum_{k\in\mathbb{S}}\int\limits_{I(\delta)}a_{k}%
(t)\Psi_{k,t}dt+%
%TCIMACRO{\dsum \nolimits_{j=1}^{m}}%
%BeginExpansion
{\displaystyle\sum\nolimits_{j=1}^{m}}
%EndExpansion
\int\limits_{(t_{j}-\delta,t_{j}+\delta)}%
%TCIMACRO{\dsum \limits_{k\in\mathbb{S}}}%
%BeginExpansion
{\displaystyle\sum\limits_{k\in\mathbb{S}}}
%EndExpansion
a_{k}(t)\Psi_{k,t}dt.
\end{equation}
On the other hand, taking into account that the functions
\[
\sum\nolimits_{k\in\mathbb{S}(v,j)}a_{k}(t)\Psi_{k,t}(x)\text{ },\text{ }%
%TCIMACRO{\dsum \nolimits_{k\in\mathbb{S}_{j}}}%
%BeginExpansion
{\displaystyle\sum\nolimits_{k\in\mathbb{S}_{j}}}
%EndExpansion
a_{k}(t)\Psi_{k,t}\text{ }\And%
%TCIMACRO{\dsum \nolimits_{k\in\mathbb{S}}}%
%BeginExpansion
{\displaystyle\sum\nolimits_{k\in\mathbb{S}}}
%EndExpansion
a_{k}(t)\Psi_{k,t}%
\]
are integrable in $(t_{j}-\delta,t_{j}+\delta)$ (see Proposition 4 and the
proof of Lemma 1$(d)$) and using the relations $\mathbb{S}_{j}=\cup
_{i=1}^{m_{j}}\mathbb{S}(i,j)$ $\And$ $\mathbb{S}(i,j)\cap\mathbb{S}(v,j),$
$\forall v\neq i,$ we obtain
\begin{equation}
\int\limits_{(t_{j}-\delta,t_{j}+\delta)}%
%TCIMACRO{\dsum \limits_{k\in\mathbb{S}}}%
%BeginExpansion
{\displaystyle\sum\limits_{k\in\mathbb{S}}}
%EndExpansion
a_{k}(t)\Psi_{k,t}dt=%
%TCIMACRO{\dsum \limits_{k\in\mathbb{S}\backslash\mathbb{S}_{j}}}%
%BeginExpansion
{\displaystyle\sum\limits_{k\in\mathbb{S}\backslash\mathbb{S}_{j}}}
%EndExpansion
\int\limits_{(t_{j}-\delta,t_{j}+\delta)}a_{k}(t)\Psi_{k,t}dt+\sum
\limits_{i=1}^{m_{j}}\int\limits_{(t_{j}-\delta,t_{j}+\delta)}\left(
%TCIMACRO{\dsum \limits_{k\in\mathbb{S}(i,j)}}%
%BeginExpansion
{\displaystyle\sum\limits_{k\in\mathbb{S}(i,j)}}
%EndExpansion
a_{k}(t)\Psi_{k,t}\right)  dt.
\end{equation}
Moreover, by definition of $\mathbb{S}(i,j)$ and Theorem 9 for each
$k\in\mathbb{S}(i,j)$ there exist $f\in E$ and $x\in(-\infty,\infty)$ such
that $a_{k}(t)\Psi_{k,t}(x)$ is not integrable with respect to $t$ over
$(t_{j}-\delta,t_{j}+\delta).$ Therefore in (65) we huddle together the
nonintegrable (over $(t_{j}-\delta,t_{j}+\delta))$ elements. Hence \ for the
considerations of the integrals over $(t_{i}-\delta,t_{i}+\delta)$ the
huddling (the parenthesis in (65)) over $\mathbb{S}(i,j)$ is necessary. It
means that, in the spectral expansions the parenthesis comprising the
functions with indices from $\mathbb{S}(i,j)$ corresponding to ESS
$\Lambda_{j}(t_{i})$ is necessary. Using (64) and (65) one can minimize the
number of terms in the parenthesis by taking the sums over $\mathbb{S}(i,j)$
corresponding to only one ESS $\Lambda_{j}(t_{i})$ and the integrals over
$I(\delta)$ and $(t_{i}-\delta,t_{i}+\delta)$ for $i=1,2,...,s.$ In Theorem 3
to avoid the complicated notations and to get a spectral expansion suitable
for the periodic differential operators we prefer to use the integrals over
$[0,2\pi).$ That is why, in (59) we use the parenthesis comprising the
functions with indices from $\mathbb{S}$ corresponding to all ESS.
\end{remark}

\section{Spectral Expansion for Even Order}

The case $n=2\mu$ is more complicated than the case $n=2\mu+1$ due to the
following. In the case of odd order the numbers of ESS and SQ are finite and
the operator $L$ has no ESS at infinity that easify the investigations (see
Proposition 3$(c)$). In the big contrary of the odd order case, in the case of
even order we, in general, meet with the both complexities:

$(a)$ the existence of the infinite number of the ESS and SQ,

$(b)$ the existence of the ESS at infinity.

The complexity $(a)$ occurs because, in general, the set $A\cap\lbrack0,2\pi)$
defined in Remark 1 contains infinite number of points. Fortunately, it
follows from uniform asymptotic formulas theorem that the possible
accumulation points of the set $A\cap Q$ are $0$ and $\pi$. Hence the set
$A\cap Q_{h}$ is finite and the sets $A\cap\left\{  t:|t|\leq h\right\}  $ and
$A\cap\left\{  t:|t-\pi|\leq h\right\}  $ are not finite, in general.
Therefore we need to investigate in detail the eigenvalues and eigenfunctions
of $L_{t}$ for the cases $|t|\leq h$ and $|t-\pi|\leq h.$ Moreover, the
complexity $(b)$ is connected with the fact that there may exists a sequence
of pairs $\left\{  n_{k},t_{k}\right\}  $ such that $\left\vert n_{k}%
\right\vert \rightarrow\infty$ and $\alpha_{n_{k}}(t_{k})\rightarrow0$ as
$k\rightarrow\infty$ and the sequence $\left\{  t_{k}\right\}  $ may have only
two accumulation points $0$ and $\pi.$ It again shows the importance of the
detail considerations about $0$ and $\pi.$

Thus, first of all, let us consider the cases $|t|\leq h$ and $|t-\pi|\leq h.$
For this we follow the arguments given in [6] for the proof of the asymptotic
formulas for the eigenvalues and eigenfunction, and take into consideration
the uniformity with respect to $t.$ First, let us introduce some notations.
Let $T$ be a domain of the complex plane such that if $\rho\in T,$ then the
inequalities
\begin{equation}
\operatorname{Re}(\rho+c_{1})\omega_{1}\leq\operatorname{Re}(\rho+c_{1}%
)\omega_{2}\leq...\leq\operatorname{Re}(\rho+c_{1})\omega_{n}%
\end{equation}
hold, for a suitable ordering of the $n$th roots $\omega_{1},\omega_{2}%
,\ldots,\omega_{n}$ of $-1$ (see p. 45 of [6]), where $c_{1}$ is a constant.
Let $h$ and $r$ be constant satisfying
\begin{equation}
0<h<1/32\text{ }\And\text{\ }1/4\leq r\leq1/2.
\end{equation}
Assume that the constant $c_{1}$ in (66) is chosen so that $\frac{1}%
{\omega_{\mu}}\rho\in T$ if $\rho$ belongs to the disks%
\begin{equation}
\left\{  z\in\mathbb{C}:\left\vert z-(i2k\pi)\right\vert <1\right\}
\end{equation}
for the large positive values of $k.$ From the arguments of [6] by the simple
estimations we obtain the following statements.

\begin{theorem}
If $n=2\mu$ then there exists a positive integer $N_{h}(0)$ independent of $t$
such that for $k>N_{h}(0)$ the disk%
\begin{equation}
\left\{  z\in\mathbb{C}:\left\vert z-(i2k\pi)^{n}\right\vert <\tfrac{1}%
{4}n(2\pi k+\tfrac{1}{4})^{n-1}\right\}
\end{equation}
contain only two eigenvalues (counting multiplicities) denoted by $\lambda
_{k}(t)$ and $\lambda_{-k}(t)$ of the operators $L_{t}$ for $|t|\leq h.$
Moreover, the washer%
\begin{equation}
\left\{  \tfrac{1}{4}n(2k\pi+\tfrac{1}{4})^{n-1}\leq\left\vert z-(i2k\pi
)^{n}\right\vert \leq\tfrac{1}{2}n(2k\pi+\tfrac{1}{4})^{n-1}\right\}
\end{equation}
for $k>N_{h}(0)$ does not contain the eigenvalues of $L_{t}$ for $|t|\leq h.$
\end{theorem}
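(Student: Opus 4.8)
The plan is to prove both assertions at once by a Rouch\'e argument for the characteristic determinant $\Delta(\rho,t)$, carried out in the local spectral variable near $i2k\pi$ supplied by (77) and then transported to the $\lambda=-\rho^n$ plane. First I would fix $k$ and regard $\Delta$ as a holomorphic function of $\rho$ on the disk $|\rho-i2k\pi|<\frac14$ together with the annulus $\frac14\le|\rho-i2k\pi|\le\frac12$. Because $\lambda=-\rho^n$ has derivative of size $n|\rho|^{n-1}\sim n(2k\pi)^{n-1}$ there, it maps a neighborhood of $i2k\pi$ biholomorphically onto a neighborhood of $(i2k\pi)^n$, and the circle $|\rho-i2k\pi|=r$ goes to a Jordan curve around $(i2k\pi)^n$ at distance $\approx n(2k\pi)^{n-1}r$; a routine comparison of radii, absorbing the elementary bound $|\rho|\le 2k\pi+\frac14$ on these circles into the shift $2k\pi\mapsto 2k\pi+\frac14$, shows that the disk (78) and the washer (79) are sandwiched between the images of $|\rho-i2k\pi|=\frac14$ and $|\rho-i2k\pi|=\frac12$. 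Hence it suffices to show that for $k$ large and all $|t|\le h$, $\Delta(\cdot,t)$ has exactly two zeros in $|\rho-i2k\pi|<\frac14$ and none in $\frac14\le|\rho-i2k\pi|\le\frac12$.

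Next I would extract the leading term of $\Delta$. Substituting the uniform asymptotic fundamental system (8) into the boundary forms $U_\nu$ of (4) and using the ordering (75) of $\mathrm{Re}((\rho+c)\omega_j)$ --- which holds precisely because $c$ was chosen so that $\rho/\omega_\mu\in T$ on the disk (77) --- the determinant factors as $\Delta(\rho,t)=\Delta_0(\rho,t)\big(1+O(k^{-1})\big)$, uniformly for $|t|\le h$, where the principal part $\Delta_0$ is an explicit exponential-trigonometric function of the (rotated) spectral variable. Writing $u=\rho-i2k\pi$, its two relevant zeros are $u=\pm it$ (the two eigenvalue locations predicted by (9)), so near the centre $\Delta_0$ behaves like a nonzero constant times $(u-it)(u+it)$. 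I would then record the elementary lower bound $|\Delta_0(\rho,t)|\ge c_2(r-|t|)^2\ge c_2(\tfrac14-\tfrac1{32})^2>0$ on every circle $|u|=r$ with $r\in[\frac14,\frac12]$, valid for all complex $t$ with $|t|\le h$ by (76); it is here that the smallness $h<\frac1{32}$ keeps both zeros $u=\pm it$ strictly inside radius $\frac14$ and uniformly away from the annulus.

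With these two facts I would finish by Rouch\'e. Choose $N_h(0)$ so large that for every $k>N_h(0)$ the uniform remainder satisfies $|\Delta-\Delta_0|<c_2(\frac14-\frac1{32})^2\le|\Delta_0|$ on all circles $|u|=r$, $r\in[\frac14,\frac12]$; the independence of $N_h(0)$ from $t$ is exactly the uniformity of the asymptotics in $|t|\le h$ established in the previous step. Then $\Delta$ and $\Delta_0$ have equally many zeros inside each such circle, so $\Delta$ has exactly the two zeros of $\Delta_0$ in $|u|<\frac14$ and none in the annulus. Transporting through $\lambda=-\rho^n$ gives exactly two eigenvalues (with multiplicity, the case $t=0$ giving the double zero $u=0$) in the disk (78) and an eigenvalue-free washer (79); I would label them $\lambda_k(t),\lambda_{-k}(t)$ so as to match the $\pm k$ branches of (9).

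The step I expect to be the main obstacle is the uniform lower bound for $\Delta_0$ on the washer down to $t=0$, together with the uniform-in-$t$ control of the $O(k^{-1})$ remainder near that point. This is delicate because at $t=0$ the two eigenvalues coalesce into a double eigenvalue at $(i2k\pi)^n$, so one must show quantitatively --- using $h<\frac1{32}$ --- that for $|t|\le h$ they never migrate out to radius $\frac14$ and that $\Delta_0$ develops no additional small values on $\frac14\le|u|\le\frac12$; this is what pins down the explicit constants in the radii of (78) and (79). Unlike the estimate (11), which is asserted only on $Q_h$, this control must be obtained afresh on the disk $|t|\le h$ by following the uniform asymptotic analysis of [8] set up through (75)--(77).
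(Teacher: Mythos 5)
Your proposal is correct and follows essentially the same route as the paper's proof: both reduce the characteristic equation $\Delta(\rho,t)=0$ near $i2k\pi$ to the model/principal equation $(e^{z}-e^{it})(e^{z}-e^{-it})=O\left(\frac{1}{\rho}\right)$ (the paper quotes this reduction from [8]), establish a lower bound for the principal part on all circles of radius $r\in[\frac{1}{4},\frac{1}{2}]$ uniformly in $|t|\le h$ using $h<\frac{1}{32}$, apply Rouch\'e's theorem to obtain exactly two zeros in the small disk and none in the annulus, and then transport the conclusion to the $\lambda$-plane. The only cosmetic discrepancy is that you first state the error multiplicatively as $\Delta=\Delta_{0}\left(1+O(k^{-1})\right)$ (which, taken literally, would force $\Delta$ and $\Delta_{0}$ to share zeros), but your Rouch\'e step correctly uses the additive bound on $|\Delta-\Delta_{0}|$, which is exactly the form (80)--(81) of the paper.
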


\begin{proof}
It was proved in [6] (see (69) in p.71 of [6]) that to investigate the large
roots of the characteristic equations $\Delta(\rho,t)=0$ it is enough to
consider the equation
\begin{equation}
\theta_{1}(e^{\rho\omega_{\mu}}-\xi^{^{\prime}})(e^{\rho\omega_{\mu}}%
-\xi^{^{\prime\prime}})=O\left(  \rho^{-1}\right)
\end{equation}
in a fixed domain $T,$ where in the case of boundary conditions (3) by direct
calculation one can verify that $\theta_{1}=(-e^{it})^{\mu}B,$ $\xi^{^{\prime
}}=e^{it},$ $\xi^{^{\prime\prime}}=e^{-it}$ (see for example (7) of [12]) and
$B$ is Vandermonde determinant for $\omega_{1},\omega_{2},\ldots,\omega_{n}.$.
Therefore (71) has the form
\begin{equation}
(e^{z}-e^{it})(e^{z}-e^{-it})=O\left(  \rho^{-1}\right)  ,
\end{equation}
where $z=\rho\omega_{\mu}$ and there exists a constant $c_{2}$ such that
$\left\vert O\left(  \rho^{-1}\right)  \right\vert <\left\vert c_{2}\rho
^{-1}\right\vert $ for $|t|\leq h.$ Using the Taylor series of $e^{z}$ at
$2\pi ki$ we get $\left\vert e^{z}-1\right\vert >r/2$ if $z$ belongs to the circle

$\gamma_{r}=:\left\{  z\in\mathbb{C}:\left\vert z-(i2k\pi)\right\vert
=r\right\}  ,$ since $r\leq1/2$ (see (67)). If $|t|\leq h,$ then using (69)
and the Maclaurin's series of $e^{x}$ we obtain $\left\vert e^{\pm
it}-1\right\vert <\left\vert 2t\right\vert \leq2h<r/4.$ These inequalities
imply that
\begin{equation}
\left\vert e^{z}-e^{\pm it}\right\vert >r/4,\text{ }\left\vert (e^{z}%
-e^{it})(e^{z}-e^{-it})\right\vert >r^{2}/16
\end{equation}
if $z\in\gamma_{r}.$ It yields the claim (\textit{Claim 1)} that (72) has no
roots on the washer%
\[
1/4\leq\left\vert z-i2k\pi\right\vert \leq1/2
\]
for large values of $k.$ Moreover, taking into account that for $|t|\leq
h<1/32$ the equation

$(e^{z}-e^{it})(e^{z}-e^{-it})=0$ has two roots in the disk
\begin{equation}
\{\xi\in\mathbb{C}:\left\vert \xi-i2\pi k\right\vert <1/4\}
\end{equation}
and using Rouche's theorem we get the claim (\textit{Claim 2)} that the disk
(74) contains only $2$ roots of (72). Thus repeating the arguments of the
proof of Theorem 2 in pages 70-74 of [6] from the \textit{Claim 1} and
\textit{Claim 2 }we obtain the proof of the theorem
\end{proof}

In the same way we prove the following.

\begin{theorem}
If $n=2\mu$ then there exists an integer $N_{h}(\pi)$ independent of $t$ such
that for $k>N_{h}(\pi)$ the disk%
\begin{equation}
\left\{  \left\vert z-(i(2k\pi+\pi))^{n}\right\vert <\tfrac{1}{4}n(2\pi
k+\pi+\tfrac{1}{4})^{n-1}\right\}
\end{equation}
contain only two eigenvalues (counting multiplicities) denoted by $\lambda
_{k}(t)$ and $\lambda_{-(k+1)}(t)$ of the operators $L_{t}$ for $|t-\pi|\leq
h.$ Moreover, the washer%
\[
\left\{  \tfrac{1}{4}n(2\pi k+\pi+\tfrac{1}{4})^{n-1}\leq\left\vert
z-(i2k\pi+i\pi)^{n}\right\vert \leq\tfrac{1}{2}n(2\pi k+\pi+\tfrac{1}%
{4})^{n-1}\right\}
\]
does not contain the eigenvalues of $L_{t}$ for $|t-\pi|\leq h.$
\end{theorem}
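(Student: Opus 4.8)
The plan is to transcribe the proof of the preceding theorem, shifting the center of attention from $i2k\pi$ to $i(2k\pi+\pi)$ and the reference value $e^{z}=1$ to $e^{z}=-1$. By the arguments of [8] the analysis again reduces to the model equation (81), $(e^{z}-e^{it})(e^{z}-e^{-it})=O(1/\rho)$ with $z=\rho\omega_{\mu}$, where now the constant bounding $|O(1/\rho)|$ is taken uniform over the strip $|t-\pi|\leq h$.

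First I would localize near $z_{k}:=i(2k\pi+\pi)$, at which $e^{z_{k}}=e^{i\pi}=-1$. Writing $t=\pi+s$ with $|s|\leq h$ gives $e^{\pm it}=-e^{\pm is}$, so that
\begin{equation*}
\left|e^{\pm it}+1\right|=\left|1-e^{\pm is}\right|<|2s|\leq 2h<\tfrac{1}{4}r
\end{equation*}
by the Maclaurin series and (76). On a circle $|z-z_{k}|=r$ the Taylor expansion of $e^{z}$ about $z_{k}$ yields $|e^{z}+1|>\tfrac12 r$ for large $k$ since $r\leq\tfrac12$. Writing $e^{z}-e^{\pm it}=(e^{z}+1)-(e^{\pm it}+1)$ and combining the two bounds, I would obtain $|e^{z}-e^{\pm it}|>\tfrac14 r$ and hence $|(e^{z}-e^{it})(e^{z}-e^{-it})|>\tfrac{1}{16}r^{2}$. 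This reproduces the analogue of \emph{Claim 1} (the model equation has no roots in the washer about $z_{k}$) and, via Rouch\'e's theorem on the disk $\{|z-z_{k}|<\tfrac14\}$, the analogue of \emph{Claim 2} (this disk contains exactly the two roots of the model equation). Passing from $z$ to $\rho$ and then to $\lambda=-\rho^{n}$ and repeating the estimates of [8] (pp.\ 70--74), the two claimed disks and the empty washer follow in the $\lambda$-plane, with the uniformity over $|t-\pi|\leq h$ giving the $t$-independent integer $N_{h}(\pi)$.

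The single new point, where this argument diverges from that of the preceding theorem, is the \emph{labeling} of the two eigenvalues as $\lambda_{k}(t)$ and $\lambda_{-(k+1)}(t)$. This I would read off from the indexing behind (9): at $t=\pi$ the leading term of $\lambda_{-(k+1)}$ is $(i(-2(k+1)\pi+\pi))^{n}=(-i(2k\pi+\pi))^{n}$, which equals $(i(2k\pi+\pi))^{n}$ because $n=2\mu$ is even, so both $\lambda_{k}$ and $\lambda_{-(k+1)}$ accumulate at the center of the disk (86). I expect the only real (and still routine) obstacle to be checking that this pairing is consistent throughout the strip $|t-\pi|\leq h$ and coincides with the two roots produced by Rouch\'e's theorem; the analytic estimates are a verbatim adaptation of the previous proof.
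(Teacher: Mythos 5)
Your proposal is correct and takes essentially the same approach as the paper: the paper's entire proof of this theorem is the remark that it is obtained ``in the same way'' as the preceding theorem, and your argument is exactly that adaptation—recentering at $z_{k}=i(2k\pi+\pi)$, substituting $t=\pi+s$ so that $\left\vert e^{\pm it}+1\right\vert =\left\vert 1-e^{\pm is}\right\vert <2h<\frac{1}{4}r$, reproducing the washer estimate $\left\vert (e^{z}-e^{it})(e^{z}-e^{-it})\right\vert >\frac{1}{16}r^{2}$, and applying Rouch\'{e}'s theorem on the disk of radius $\frac{1}{4}$ before returning to the $\lambda$-plane via the arguments of [8]. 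Your index verification that $\lambda_{-(k+1)}(t)$ has leading term $(i(-2(k+1)\pi+\pi))^{n}=(i(2k\pi+\pi))^{n}$ for even $n$ correctly justifies the labeling of the two eigenvalues, which the paper leaves implicit.
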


\begin{remark}
Consider the family of operators
\[
L_{t,z}=L_{t}(0)+z(L_{t}-L_{t}(0)),\text{ }0\leq z\leq1,\text{ }|t|\leq h,
\]
where $L_{t}(0)$ denotes the case when all coefficients of (1) are zero.
Repeating the proof of Theorem 4 one can readily see that there exist
$N_{h}(0),$ independent of $z\in\lbrack0,1]$ and $|t|\leq h,$ so that Theorems
4 continues to hold for the operators $L_{t,z}$. Therefore, there exists a
closed curve $\Gamma(0)$ such that:

$(a)$ the curve $\Gamma(0)$ lies in the resolvent set of $L_{t,z}$ for
$z\in\lbrack0,1]$ and $|t|\leq h.$

$(b)$ all eigenvalues of $L_{t,z}$ for $z\in\lbrack0,1]$ and $|t|\leq h$ that
do not lie in (69) for $k>N_{h}(0)$ belong to the set enclosed by $\Gamma(0).$

Therefore, taking into account that the family $L_{t,z}$ is halomorphic with
respect to $z,$ we obtain that the number of eigenvalues of the operators
$L_{t,0}=L_{t}(0)$ and $L_{t,1}=L_{t}$ lying inside of $\Gamma(0)$ are the
same. It means that apart from the eigenvalues $\lambda_{k}(t),$ where
$|k|>N_{h}(0),$ there exist $(2N_{h}(0)+1)$ eigenvalues of the operator
$L_{t}$ for $|t|\leq h.$denoted by
\begin{equation}
\lambda_{k}(t)\text{ }:\text{ \ }k\in\mathbb{N}(0)
\end{equation}
and lying in $\Gamma(0),$ where $\mathbb{N}(0)=\left\{  0,\pm1,...,\pm
N_{h}(0)\right\}  .$

By the same arguments we obtain that apart from the eigenvalues $\lambda
_{k}(t)$ and $\lambda_{-(k+1)}(t)$ \ lying in (75) for $k>N_{h}(\pi)$ there
exist $(2N_{h}(\pi)+2)$ eigenvalues the operator $L_{t}$ for $|t-\pi|\leq h$
denoted by
\begin{equation}
\lambda_{k}(t)\text{: }k\in\mathbb{N}(\pi),
\end{equation}
where $\mathbb{N}(\pi)=\left\{  0,\pm1,...,\pm N_{h}(\pi),-(N_{h}%
(\pi)+1)\right\}  ,$ and there exist a closed curve $\Gamma(\pi)$ which
contains inside only the eigenvalues (77). Thus in any case we numerate the
eigenvalues of $L_{t}$ by the elements of $\mathbb{Z}.$ Moreover using uniform
asymptotic formulas theorem (see introduction), Theorems 4 and 5 and repeating
the proof of (15) and (16) of [15] one can show that the eigenvalues of
$L_{t}$ can be numbered (counting the multiplicity) by elements of
$\mathbb{Z}$ such that for each $k$ the function $\lambda_{k}(t)$ is
continuous on $[0,2\pi)$ and for $|k|>N(h)$ formula (6) holds (for the
continuous numeration one can look also Lemma 3.1 of [13]).
\end{remark}

Theorem 4 shows that for the large values of $k$ the eigenvalue $\lambda
_{k}(t)$ of $L_{t}$ for $|t|\leq h$ is close to the eigenvalues $\left(
\pm2k\pi i+it\right)  ^{n}$ of $L_{t}(0)$ and far from the other eigenvalues
$L_{t}(0).$ More precisely, Theorem 4 implies that
\[
|\lambda_{\pm k}(t)-\left(  2\pi pi+it\right)  ^{n}%
|>(||k|-|p||)(|k|+|p|)^{n-1}%
\]
for $p\neq\pm k,$ $|t|\leq h$ and $k>N_{h}(0)$. Using this one can easily
verify that the formulas%
\begin{equation}
\sum_{p:p\neq\pm k}\left(  \frac{\mid p\mid^{n-2}}{\left\vert \lambda_{\pm
k}(t)-(2\pi pi+it)^{n}\right\vert }\right)  =O(k^{-1}\ln\left\vert
k\right\vert ),
\end{equation}%
\begin{equation}
\sum_{p:p\neq\pm k}\left(  \frac{\mid p\mid^{n-2}}{\left\vert \lambda_{\pm
k}(t)-(2\pi pi+it)^{n}\right\vert }\right)  ^{2}=O(k^{-2})
\end{equation}
hold uniformly with respect to $|t|\leq h.$ Using (79) we obtain the following.

\begin{theorem}
The normalized eigenfunction $\Psi_{k,t}$ of $L_{t}$ satisfies the following,
uniform with respect to $t$ in $\left\{  t\in\left(  \mathbb{C}\backslash
A\right)  :|t|\leq h\right\}  ,$ asymptotic formula
\[
\Psi_{k,t}(x)=e^{itx}(u_{k,k}(t)e^{i2\pi kx}+u_{k,-k}(t)e^{-i2\pi kx}%
+h_{k,t}(x)),\text{ }\left\Vert h_{k,t}\right\Vert =O(k^{-1}),
\]
where $u_{k,j}(t)=(\Psi_{k,t},e^{i(2\pi j+\overline{t})x}),$ $(h_{k,t},e^{\pm
i2\pi kx})=0,$ $\sup\limits_{x\in\lbrack0,1]}|h_{k,t}(x)|=O\left(  k^{-1}%
\ln\left\vert k\right\vert \right)  .$
\end{theorem}

\begin{proof}
For the proof we use the formula
\begin{equation}
(\lambda_{k}(t)-(2p\pi i+it)^{n})\left(  \Psi_{k,t},e^{(2p\pi i+i\overline
{t})}\right)  =\left(  p_{2}\Psi_{k,t}^{(n-2)}+\cdots+p_{n}\Psi_{k,t}%
,e^{(2p\pi i+i\overline{t})x}\right)
\end{equation}
which can be obtained from $\Psi_{k,t}^{(n)}+p_{2}\Psi_{k,t}^{(n-2)}%
+\cdots+p_{n}(x)\Psi_{k,t}=\lambda_{k}(t)\Psi_{k,t}$ by multiplying by
$e^{(2p\pi i+i\overline{t})x}$ . Using the integration by part and (2) one can
easily verify that there exists a constant $c_{3}$ such that
\begin{equation}
\left\vert \left(  p_{2}\Psi_{k,t}^{(n-2)}+\cdots+p_{n}\Psi_{k,t},e^{(2p\pi
i+i\overline{t})x}\right)  \right\vert <c_{3}p^{n-2}.
\end{equation}
Therefore using (79) and (78) we obtain
\[%
%TCIMACRO{\tsum \limits_{p:p\neq\pm k}}%
%BeginExpansion
{\textstyle\sum\limits_{p:p\neq\pm k}}
%EndExpansion
\mid\left(  \Psi_{k,t},e^{(2p\pi i+i\overline{t})x}\right)  \mid^{2}%
=O(k^{-2}),\text{ }%
%TCIMACRO{\tsum \limits_{p:p\neq\pm k}}%
%BeginExpansion
{\textstyle\sum\limits_{p:p\neq\pm k}}
%EndExpansion
\mid\left(  \Psi_{k,t},e^{(2p\pi i+i\overline{t})x}\right)  \mid=O(k^{-1}%
\ln\left\vert k\right\vert )
\]
Now decomposing $\Psi_{k,t}$ by the basis $\{e^{(2k\pi i+it)x}:k\in
\mathbb{Z}\}$, we get the proof of the theorem
\end{proof}

Instead of Theorem 4 using Theorem 5 and repeating the proof of Theorem 6 we obtain

\begin{theorem}
The normalized eigenfunction $\Psi_{k,t}$ of $L_{t}$ satisfies the following,
uniform with respect to $t$ in $\left\{  t\in\left(  \mathbb{C}\backslash
A\right)  :|t-\pi|\leq h\right\}  $ asymptotic formula
\begin{equation}
\Psi_{k,t}(x)=e^{itx}\left(  u_{k,k}(t)e^{i2\pi kx}+u_{k,-(k+1)}%
(t)e^{-i2\pi(k+1)x}+\phi_{k,t}(x)\right)  ,
\end{equation}
where $\left\Vert \phi_{k,t}\right\Vert =O(k^{-1}),$ $\sup\limits_{x\in
\lbrack0,1]}|\phi_{k,t}(x)|=O\left(  k^{-1}\ln\left\vert k\right\vert \right)
,$ $(\phi_{k,t},e^{i2\pi jx})=0$ for $j=k,-(k+1).$
\end{theorem}

Repeating the proof of Theorem 6 we obtain

\begin{theorem}
The normalized eigenfunction $\Psi_{k,t}$ of $L_{t}$ satisfies the following,
uniform with respect to $t$ in $[0,2\pi]\backslash A,$ asymptotic formula
\begin{equation}
\Psi_{k,t}(x)=e^{itx}(u_{k,k}(t)e^{i2\pi kx}+u_{k,-k}(t)e^{-i2\pi
kx}+u_{k,-(k+1)}(t)e^{-i2\pi(k+1)x}+\varphi_{k,t}(x)),
\end{equation}
where $\sup\limits_{x\in\lbrack0,1]}|\varphi_{k,t}(x)|=O\left(  k^{-1}%
\ln\left\vert k\right\vert \right)  .$
\end{theorem}

Moreover using (80) and (81) by direct calculation we obtain the following
lemma which plays the crucial role in the proof of the next theorem.

\begin{lemma}
$(a)$ For each $k\in\mathbb{Z}$ there exists $l\in\mathbb{N}$ such that
\begin{align}
\Psi_{k,t}(x) &  =P_{l,t}(x)+h_{l,t}(x),\text{ }\left\vert h_{l,t}%
(x)\right\vert <1/4,\\
P_{l,t}(x) &  =%
%TCIMACRO{\tsum \limits_{\left\vert p\right\vert <l}}%
%BeginExpansion
{\textstyle\sum\limits_{\left\vert p\right\vert <l}}
%EndExpansion
\left(  \Psi_{k,t},e^{i(2\pi p+t)x}\right)  e^{i(2\pi p+t)x}%
\end{align}
for all $x\in\lbrack0,1]$ and $t\in\lbrack0,2\pi)\backslash A(k),$ where
$A(k)$ is defined in Remark 2.

$(b)$ For each $k\in\mathbb{Z}$ there exists a constant $c_{4}$ such that
\[
\left\vert \Psi_{k,t}(x)\right\vert <c_{4},\text{ }\left\vert \Psi_{k,t}%
^{\ast}(x)\right\vert <c_{4},\text{ }\forall x\in\lbrack0,1],\text{ }%
t\in\left(  \lbrack0,2\pi)\backslash A(k)\right)  .
\]

$(c)$ Suppose that $t_{0}\in A(k)$ and $(t_{0},t_{0}+\delta)\cap
A(k)=\varnothing.$ Then for each $t\in(t_{0},t_{0}+\delta)$ there exists
positive integer $j$ such that%
\begin{equation}
\left\vert \Psi_{k,t}(x)\right\vert >\tfrac{1}{4},\text{ }\forall x\in\left[
\tfrac{j-1}{q},\tfrac{j}{q}\right]  ,
\end{equation}
where $1\leq j\leq q,$ $q\in\mathbb{N}$, $q>8\pi l$ and $l$ is defined in
$(a).$
\end{lemma}

\begin{proof}
$\left(  a\right)  $ It is clear that for each $k\in\mathbb{Z}$ there exists
$l\in\mathbb{N}$ such that
\[
|\lambda_{\pm k}(t)-\left(  2\pi pi+it\right)  ^{n}|>|p|^{n},\text{ }%
\forall|p|\geq l,\text{ }t\in\lbrack0,2\pi)
\]
Therefore using (80) and (81) we obtain that if $\left\vert p\right\vert \geq
l,$ then there exists $c_{5}$ such that
\[
\left\vert \left(  \Psi_{k,t},e^{i(2\pi p+t)x}\right)  \right\vert
<c_{5}/p^{2}.
\]
It with the Fourier decomposition of $\Psi_{k,t}$ with respect to the
orthonormal basis $\left\{  e^{i(2\pi p+t)x}:p\in\mathbb{Z}\right\}  $ yields
(84) and (85).

The proof of $(b)$ follows from $(a)$.

$(c)$ Since $\left\Vert \Psi_{k,t}\right\Vert =1,$ for each $t\in(t_{0}%
,t_{0}+\delta)$ there exists $x(t)\in\lbrack0,1]$ such that
\begin{equation}
\left\vert \Psi_{k,t}(x(t))\right\vert \geq1,\text{ }%
%TCIMACRO{\tsum \limits_{\left\vert p\right\vert <l}}%
%BeginExpansion
{\textstyle\sum\limits_{\left\vert p\right\vert <l}}
%EndExpansion
\left\vert \left(  \Psi_{k,t},e^{i(2\pi p+t)x}\right)  \right\vert ^{2}\leq1.
\end{equation}
On the other hand, it follows from (84), (85) and (87) that%
\[
\left\vert P_{l,t}(x(t))\right\vert >3/4,\text{ }\left\vert P_{l,t}^{^{\prime
}}(x)\right\vert <2\pi l,
\]
where $P_{k,t}^{^{\prime}}(x)$ is the derivative of $P_{k,t}(x)$ with respect
to $x$. These inequalities imply that
\[
\left\vert P_{l,t}(x)\right\vert >1/2,\text{ }\forall x\in\lbrack
x(t)-1/q,x(t)+1/q],
\]
since $q>8\pi l.$ Therefore using (84) we obtain
\[
\left\vert \Psi_{k,t}(x)\right\vert >1/4,\text{ }\forall x\in\lbrack
x(t)-1/q,x(t)+1/q].
\]
Thus if $x(t)\in\lbrack\tfrac{j-1}{q},\tfrac{j}{q}],$ then (86) holds
\end{proof}

\begin{theorem}
Let $\lambda_{k}(t)$\ be multiple and simple eigenvalue for $t=t_{0}$ and
$t\in D(t_{0},\delta),$ where $D(t_{0},\delta):=(t_{0}-\delta,t_{0}%
+\delta)\backslash\left\{  t_{0}\right\}  ,$ respectively. Then the integral
\begin{equation}
\int\nolimits_{(t_{0}-\delta,t_{0}+\delta)}a_{k}(t)\Psi_{k,t}(x)dt
\end{equation}
exists for all $x\in(-\infty,\infty)$ and $f\in E$ if and only if $\frac
{1}{\alpha_{k}}\in L_{1}(t_{0}-\delta,t_{0}+\delta).$
\end{theorem}

\begin{proof}
Using Condition 1 and Lemma 2$(b)$ we obtain that there exists $c_{6}$ such
that $\left\vert \left(  f_{t},\Psi_{k,t}^{\ast}\right)  \Psi_{k,t}%
(x)\right\vert <c_{6}$ for all $t\in D(t_{0},\delta)$ and $x\in(-\infty
,\infty).$ Therefore if $\frac{1}{\alpha_{k}}$ is integrable over
$(t_{0}-\delta,t_{0}+\delta),$ then the expression $a_{k}(t)\Psi_{k,t}(x)$
(see (23)) is also integrable over $(t_{0}-\delta,t_{0}+\delta)$ for all
$x\in(-\infty,\infty)$ and $f\in E.$

Now let $\frac{1}{\alpha_{k}}\notin L_{1}(t_{0}-\delta,t_{0}+\delta).$ Without
loss of generality suppose that $\frac{1}{\alpha_{k}}\notin L_{1}(t_{0}%
,t_{0}+\delta)$ which implies that
\begin{equation}
\int\nolimits_{(t_{s},t_{s}+\delta)}\left\vert \alpha_{k}(t)\right\vert
^{-1}dt\rightarrow\infty
\end{equation}
as $t_{s}\rightarrow t_{0}$ and $t_{s}>t_{0}.$ Introduce the notation
\begin{align*}
A_{j}  &  =\left\{  t\in(t_{0},t_{0}+\delta):\text{ }\left\vert \Psi
_{k,t}(x)\right\vert >\tfrac{1}{4},\text{ }\forall x\in\left[  \tfrac{j-1}%
{q},\tfrac{j}{q}\right]  \right\}  ,\\
B_{p}  &  =\left\{  t\in(t_{0},t_{0}+\delta):\text{ }\left\vert \left(
\Psi_{k,t}^{\ast},e^{i(2\pi p+t)x}\right)  \right\vert >\tfrac{1}{2l}\right\}
.
\end{align*}
By (2) formulas (84) and (85) holds for $\Psi_{k,t}^{\ast}$ too. Therefore
using Parseval equality we obtain that for each $t\in(t_{0},t_{0}+\delta)$
there exists $p$ such that $\left\vert \left(  \Psi_{k,t}^{\ast},e^{i(2\pi
p+t)x}\right)  \right\vert >1/2l.$ Therefore by Lemma 2$(c)$ the union of
$A_{j}\cap B_{p}\ $\ for $j=1,2,...,q$ and $\left\vert p\right\vert <l$ is
$(t_{0},t_{0}+\delta).$ Thus by (89) there exists $j$ and $p$ such that
\begin{equation}
\int\nolimits_{(t_{s},t_{s}+\delta)\cap\left(  A_{j}\cap B_{p}\right)
}\left\vert \alpha_{k}(t)\right\vert ^{-1}dt\rightarrow\infty
\end{equation}
as $t_{s}\rightarrow t_{0}.$ By the definition of $E$ we have $\Upsilon
^{-1}\left(  e^{i(2\pi p+t)x}\right)  \in E,$ where $\Upsilon^{-1}$ is the
inverse \ Gelfand transform defined by (14). Thus for each fixed $x_{0}%
\in\left[  \tfrac{j-1}{q},\tfrac{j}{q}\right]  $ taking
\begin{equation}
f=\Upsilon^{-1}\left(  e^{i(2\pi p+t)x}\right)
\end{equation}
using (23) and definitions of $A_{j}$ and $B_{p}$ we obtain that
\[
\left\vert a_{k}(t)\Psi_{k,t}(x_{0})\right\vert =\frac{1}{\left\vert
\alpha_{k}(t)\right\vert }\left\vert \Psi_{k,t}(x_{0})\right\vert ^{2}%
>\frac{1}{16l\left\vert \alpha_{k}(t)\right\vert },\text{ }\forall t\in\left(
A_{j}\cap B_{p}\right)  .
\]
It with (90) implies that (88) does not exist for $f$ defined in (91) and for
$x=x_{0}$
\end{proof}

Now we are ready to construct the curve of integration $l$ (see (22)) for the
even order case. Since in the case $n=2\mu$ the numbers $0$ and $\pi$ are
accumulation points of $A\cap Q,$ we consider the set $A^{\prime}=:(A\cap
Q)\cup\{0,\pi\},$ defied in Remark 1, and choose the curve of integration so
that it pass over the points of $A^{\prime}$ . Namely, we construct $l$ as
follows. Let $h$ be positive number satisfying (67) and such that $\pm h\notin
A,$ $\left(  \pi\pm h\right)  \notin A.$ By Remark 1 the set $A\cap Q_{h\text{
}}$ is finite. Denote the points of $A\cap B(h)$ by $t_{1},t_{2},...,t_{s}$,
where
\begin{align}
B(h)  &  =[0,2\pi)\cap Q_{h\text{ }}=[h,\pi-h]\cup\lbrack\pi+h,2\pi-h],\\
h  &  <t_{1}<t_{2}<\cdot\cdot\cdot<t_{p}<\pi-h<\pi+h<t_{p+1}<t_{p+2}%
<\cdot\cdot\cdot<t_{s}<2\pi-h.
\end{align}
Let $\varepsilon$ be positive number such that $2\varepsilon$ is less than the
following numbers
\begin{equation}
h,t_{1}-h,\pi-h-t_{p},\text{ }t_{p+1}-\pi-h,\text{ }2\pi-h-t_{s},t_{j+1}-t_{j}%
\end{equation}
for $j\in\left\{  1,2,...,s-1\right\}  \backslash p,$ that is, $\varepsilon$
is less than half of the distance between the neighboring points of (93). Let
$C(h,\varepsilon)\subset Q_{h}$ be a curve joining the points $-h$ and
$2\pi-h$ and consisting of the intervals
\begin{equation}
\lbrack h,t_{1}-\varepsilon),(t_{j}+\varepsilon,t_{j+1}-\varepsilon
),(t_{p}+\varepsilon,\pi-h),(\pi+h,t_{p+1}-\varepsilon),(t_{s}+\varepsilon
,2\pi-h]
\end{equation}
for $j\in\left\{  1,2,...,s-1\right\}  \backslash\left\{  p\right\}  $ and
semicircles
\begin{align}
\gamma(0,h)  &  =\{\left\vert t\right\vert =h,\operatorname{Im}t\geq0\},\text{
}\gamma(\pi,h)=\{\left\vert t-\pi\right\vert =h,\operatorname{Im}t\geq0\},\\
\gamma(t_{j},\varepsilon)  &  =\{\left\vert t-t_{j}\right\vert =\varepsilon
,\operatorname{Im}t\geq0\}
\end{align}
for $j=1,2,...,s.$ Since $A\cap Q_{\delta}$ is a finite set for any $\delta
>0$, the numbers $h$ and $\varepsilon$ can be chosen so that the curve
$C(h,\varepsilon)$ does not contain the point of the set $A$. Divide
$C(h,\varepsilon)$ into three parts: $\gamma(0,h),$ $\gamma(\pi,h)$ and
$l(h,\varepsilon)=C(h,\varepsilon)\backslash(\gamma(0,h)\cup\gamma(\pi,h)).$
Thus $l(h,\varepsilon)$ consists of the intervals (95) and semicircles (97)
and lies in the domain $Q_{h}\backslash A.$ Since $A\cap Q_{h}$ is a finite
set, $\varepsilon$ can be chosen so that $l(h,\varepsilon)\cap A=\emptyset,$
and the open domain $D(\varepsilon,h)$ enclosed above by the curve
$l(h,\varepsilon)$ and below by $B(h)$ does not contain the point of the set
$A$. The semicircles $\gamma(0,h)$ and $\gamma(\pi,h)$ also lie in
$Q_{h}\backslash A.$

In (20) instead of $l$ taking $C(h,\varepsilon)=l(h,\varepsilon)\cup
\gamma(0,h)\cup\gamma(\pi,h)$ and using (21) we obtain%
\begin{align}
2\pi f  &  =\int\limits_{l(h,\varepsilon)}f_{t}(x)dt+\int\limits_{\gamma
(0,h)}f_{t}(x)dt+\int\limits_{\gamma(\pi,h)}f_{t}(x)dt=\\
&  \int\limits_{l(h,\varepsilon)}\sum\limits_{k\in\mathbb{Z}}a_{k}%
(t)\Psi_{k,t}dt+\int\limits_{\gamma(0,h)}\sum\limits_{k\in\mathbb{Z}}%
a_{k}(t)\Psi_{k,t}dt+\int\limits_{\gamma(\pi,h)}\sum\limits_{k\in\mathbb{Z}%
}a_{k}(t)\Psi_{k,t}dt\nonumber\\
&  =\sum\limits_{k\in\mathbb{Z}}\int\limits_{l(h,\varepsilon)}a_{k}%
(t)\Psi_{k,t}dt+\sum\limits_{k\in\mathbb{Z}}\int\limits_{\gamma(0,h)}%
a_{k}(t)\Psi_{k,t}dt+\sum\limits_{k\in\mathbb{Z}}\int\limits_{\gamma(\pi
,h)}a_{k}(t)\Psi_{k,t}dt.\nonumber
\end{align}
The investigation of the integral over $l(h,\varepsilon)$ in (98) is the
repetition of the investigation of (22) for $l=l(\varepsilon)$ in the case
$n=2\mu+1$, where $l(\varepsilon)$ is defined in (43), due to the followings.

1. The sets $A\cap B(h)$ and $A\cap\lbrack0,2\pi)$ in the cases $n=2\mu$ and
$n=2\mu+1$ respectively are finite. For simplicity of notations in the both
cases we denote these sets by $t_{1},t_{2},...,t_{s}$. The set of multiple
eigenvalues of $L_{t_{j}}$ for $j=1,2,...,s$ is finite and is denoted by
$\Lambda_{1}(t_{j}),$ $\Lambda_{2}(t_{j}),...,\Lambda_{s_{j}}(t_{j})$ as in
the case $n=2\mu+1.$ Moreover the definitions of $\mathbb{S}(v,j)$ and
$\mathbb{T}(v,j)$ for $\Lambda_{v}(t_{j})$ are the same and in the both cases
they are finite sets. Therefore the set $\mathbb{S}(h)$ of indices
$k\in\mathbb{Z}$ for which $\frac{1}{\alpha_{k}}$ is nonintegrable in $B(h)$
is finite as the set $\mathbb{S}$ defined in (54) for the case $n=2\mu+1.$

2. In the case $n=2\mu$ the curve $l(\varepsilon,h)$ and set $D(\varepsilon
,h)$ have the same properties as the curve $l(\varepsilon)$ and set
$D(\varepsilon)$ in case $n=2\mu+1,$ where $D(\varepsilon,h)$ is the open set
enclosed by the curve $l(\varepsilon,h)\cup B(h).$ Namely, Proposition 3$(a)$
and $(b),$ which signify the reason of the simplicity of the odd case,
continue to hold if $2\mu+1,$ $l(\varepsilon)$, $D(\varepsilon)$ and (42) are
replaced by $2\mu,$ $l(\varepsilon,h)$, $D(\varepsilon,h)$ and (95) respectively.

Therefore taking into account that Proposition 2 and Theorem 1 hold for
$n=2\mu$ too and repeating the proof of Theorem 3 we obtain the following theorem.

\begin{theorem}
For each $f\in E$ the following equality
\begin{equation}
\int\limits_{l(\varepsilon,h)}f_{t}(x)dt=F(h)+\sum_{k\in\mathbb{Z}%
\backslash\mathbb{S}(h)}\int\limits_{B(h)}a_{k}(t)\Psi_{k,t}(x)dt
\end{equation}
hold, where $f_{t}(x)$ is defined by (15),
\[
F(h)=\int\limits_{B(h)}\sum_{k\in\mathbb{S}(h)}a_{k}(t)\Psi_{k,t}%
(x)dt=\lim_{\delta\rightarrow0}\sum_{k\in\mathbb{S}(h)}\int\limits_{I(\delta
,h)}a_{k}(t)\Psi_{k,t}(x)dt
\]
and $I(\delta,h)$ is obtained from $B(h)$ by deleting the $\delta
\in(0,\varepsilon)$ neighborhood of the SQ $t_{1\text{ }},t_{1\text{ }%
},...,t_{m\text{ }}$ lying in $B(h)$:%
\[
I_{\delta}(\delta,h)=B(h)\backslash%
%TCIMACRO{\tbigcup \nolimits_{j=1}^{m}}%
%BeginExpansion
{\textstyle\bigcup\nolimits_{j=1}^{m}}
%EndExpansion
(t_{j}-\delta,t_{j}+\delta).
\]
The series in (99) converge in the norm of $L_{2}(a,b)$ for every
$a,b\in\mathbb{R}.$
\end{theorem}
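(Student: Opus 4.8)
The plan is to transcribe the proof of Theorem 3 verbatim under the dictionary $l(\varepsilon)\rightsquigarrow l(h,\varepsilon)$, $[0,2\pi)\rightsquigarrow B(h)$, $\mathbb{S}\rightsquigarrow\mathbb{S}(h)$, $I(\delta)\rightsquigarrow I(\delta,h)$, exactly as licensed by items 1--3 preceding the statement. First I would record that, by the Riesz-basis decomposition (26) together with the term-by-term integration established in [17] for continuous curves lying in $Q_{h}\backslash A$, the left-hand side of (99) is
\[
\int\limits_{l(h,\varepsilon)}f_{t}(x)\,dt=\sum_{k\in\mathbb{Z}}\int\limits_{l(h,\varepsilon)}a_{k}(t)\Psi_{k,t}(x)\,dt=\Sigma(l(h,\varepsilon)).
\]
Then I would split $\mathbb{Z}$ into the three pairwise disjoint pieces $\mathbb{Z}\backslash\mathbb{T}$, $\mathbb{T}\backslash\mathbb{S}(h)$, and $\mathbb{S}(h)$ and treat the three families of integrals over $l(h,\varepsilon)$ separately, in imitation of the passage (71)--(74).

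For $k\in\mathbb{Z}\backslash\mathbb{T}$ the eigenvalue $\lambda_{k}(t)$ is simple on all of $\overline{D(\varepsilon)}$, since $l(h,\varepsilon)$ and the enclosed domain $D(\varepsilon)$ were constructed to avoid $A$; hence by the analogue of Remark 1 the function $a_{k}(t)\Psi_{k,t}(x)$ is analytic near each semicircle $\gamma(t_{j},\varepsilon)$, $j=1,\dots,s$, occurring in $l(h,\varepsilon)$, and Cauchy's theorem replaces each such semicircle by its base interval; summing the intervals (94) with these bases recovers $B(h)$, so the integral over $l(h,\varepsilon)$ becomes $\int_{B(h)}a_{k}(t)\Psi_{k,t}(x)\,dt$. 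For $k\in\mathbb{T}\backslash\mathbb{S}(h)$ the analogue of Lemma 1(b) (read on $B(h)$) gives, via Proposition 1 and the continuity of $\tfrac{1}{\alpha_{k}}$ off the $t_{j}$, that $a_{k}(t)\Psi_{k,t}(x)$ is integrable on $B(h)$, while Theorem 1 again replaces the semicircles by their bases, yielding $\int_{B(h)}a_{k}(t)\Psi_{k,t}(x)\,dt$. Because $\mathbb{Z}\backslash\mathbb{S}(h)=(\mathbb{Z}\backslash\mathbb{T})\cup(\mathbb{T}\backslash\mathbb{S}(h))$, these two families combine into the second summand $\sum_{k\in\mathbb{Z}\backslash\mathbb{S}(h)}\int_{B(h)}a_{k}(t)\Psi_{k,t}(x)\,dt$ of (99).

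It remains to extract $E(h)$ from $\mathbb{S}(h)$. Here I would invoke the analogue of Lemma 1(c): although each $a_{k}(t)\Psi_{k,t}(x)$ with $k\in\mathbb{S}(h)$ may be nonintegrable at an ESQ $t_{j}\in B(h)$, the total sum $S(x,t)=\sum_{k\in\mathbb{S}(h)}a_{k}(t)\Psi_{k,t}(x)$ is integrable on $B(h)$, the nonintegrable parts cancelling bundle by bundle (Remark 3). Applying Theorem 1 to the finite set $\mathbb{S}(h)$ replaces the semicircles by their bases for the full sum $S$, and then the absolute-continuity argument of (74) — writing $\int_{B(h)}S=\sum_{k\in\mathbb{S}(h)}\int_{I(\delta,h)}a_{k}\Psi_{k,t}+\int_{B(h)\backslash I(\delta,h)}S$ and letting $\delta\to0$, the last integral vanishing since $\mathrm{meas}(B(h)\backslash I(\delta,h))\to0$ and $S$ is integrable — produces exactly $E(h)=\lim_{\delta\to0}\sum_{k\in\mathbb{S}(h)}\int_{I(\delta,h)}a_{k}(t)\Psi_{k,t}(x)\,dt$. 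Collecting the three contributions gives (99), and the $L_{2}(a,b)$ convergence is inherited from the finiteness of $\mathbb{S}(h)$ and the already-established convergence for $k\in\mathbb{Z}\backslash\mathbb{T}$.

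The main obstacle is not this final assembly, which is a faithful copy of the proof of Theorem 3, but the right to use Theorem 1, Lemma 1, and Proposition 5 on $B(h)$ rather than on $(-\pi,\pi]$. The point is that the accumulation points $0$ and $\pi$ of $A\cap Q$ are deliberately excised: by the Result of [18] the set $A\cap Q_{h}$ is finite, so on $B(h)=[h,\pi-h]\cup[\pi+h,2\pi-h]$ there are only finitely many multiple eigenvalues, finitely many ESS, and finitely many ESQ (item 1 above), and the endpoints $h,\pi-h,\pi+h,2\pi-h$ were chosen outside $A$. Hence $\tfrac{1}{\alpha_{k}}$ is continuous on $B(h)$ off the finitely many $t_{j}$, the set $\mathbb{S}(h)$ is finite, and the three auxiliary results apply word for word with $B(h)$ in place of $[0,2\pi)$. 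The genuinely delicate phenomena near $0$ and $\pi$ — the infinitely many multiple eigenvalues and the spectral singularity at infinity controlled by Theorems 4--7 — never enter (99); they are quarantined in the separate contributions $\Sigma(\gamma(0,h))$ and $\Sigma(\gamma(\pi,h))$, to be handled elsewhere.
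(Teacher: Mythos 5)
Your overall strategy --- justify the dictionary $l(\varepsilon)\rightsquigarrow l(h,\varepsilon)$, $[0,2\pi)\rightsquigarrow B(h)$, $\mathbb{S}\rightsquigarrow\mathbb{S}(h)$, $I(\delta)\rightsquigarrow I(\delta,h)$ by the finiteness of $A\cap Q_{h}$, and then repeat the proof of Theorem 3 --- is exactly the paper's, and your closing paragraph explaining why the pathologies at $0$ and $\pi$ never enter (99) is correct. The gap is in the transcription itself: you split $\mathbb{Z}$ into $\mathbb{Z}\backslash\mathbb{T}$, $\mathbb{T}\backslash\mathbb{S}(h)$ and $\mathbb{S}(h)$ \emph{before} replacing the semicircles $\gamma(t_{j},\varepsilon)$ by the intervals $[t_{j}-\varepsilon,t_{j}+\varepsilon]$, and you invoke Theorem 1 to perform that replacement (i) term by term for each $k\in\mathbb{T}\backslash\mathbb{S}(h)$ and (ii) for the sub-sum over $\mathbb{S}(h)$. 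Theorem 1 licenses neither. Its equality (38) holds only for the sum over a \emph{full} bundle $\mathbb{T}(v,j)$, because only that sum coincides with the total projection $T(x,t)$ of (40), which is analytic in the whole disk $U(t_{j})$. An individual member of a bundle, even one with $\frac{1}{\alpha_{k}}$ integrable (hence $k\notin\mathbb{S}(h)$), in general has a branch point at $t_{j}$: its values on $\gamma(t_{j},\varepsilon)$ and its values on $[t_{j}-\varepsilon,t_{j})$ need not belong to one single-valued analytic function, so Cauchy's theorem does not apply to it, and the asserted equality $\int_{l(h,\varepsilon)}a_{k}(t)\Psi_{k,t}(x)dt=\int_{B(h)}a_{k}(t)\Psi_{k,t}(x)dt$ is not established --- integrability of $\frac{1}{\alpha_{k}}$ on the real interval is no substitute for analyticity across $t_{j}$, and whether the equality even holds depends on how the real-axis labelling of the bundle on the two sides of $t_{j}$ matches analytic continuation through the upper half-disk, a point neither you nor the paper addresses. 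The same objection hits ``applying Theorem 1 to the finite set $\mathbb{S}(h)$'': $\mathbb{S}(v,j)$ is a proper subset of $\mathbb{T}(v,j)$, and no contour-deformation statement exists for sub-sums of a bundle. This is the very phenomenon the bundle machinery is built around (see the remark after Theorem 1, Remark 3, and Conclusion 1: the individual summands of (47) may be unbounded, nonintegrable, and inseparable at $t_{j}$).

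The repair is to restore the paper's order of operations, i.e., transcribe (71)--(74) as they stand. First deform with bundles intact: for each $j$, $\sum_{k\in\mathbb{T}_{j}}\int_{\gamma(t_{j},\varepsilon)}a_{k}(t)\Psi_{k,t}(x)dt=\int_{[t_{j}-\varepsilon,t_{j}+\varepsilon]}\sum_{k\in\mathbb{T}_{j}}a_{k}(t)\Psi_{k,t}(x)dt$ by Theorem 1 and Remark 1, while for $k\in\mathbb{T}\backslash\mathbb{T}_{j}$ the eigenvalue is simple throughout $U(t_{j})$ (Proposition 5(c)) and the termwise deformation is legitimate; summing over the pieces of $l(h,\varepsilon)$ gives the analogue of (72), namely $\sum_{k\in\mathbb{T}}\int_{l(h,\varepsilon)}a_{k}(t)\Psi_{k,t}(x)dt=\int_{B(h)}\sum_{k\in\mathbb{T}}a_{k}(t)\Psi_{k,t}(x)dt$. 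Only then, working on $B(h)$ where no analyticity is needed, split off the integrable terms by linearity (analogue of (73), via Lemma 1(b)) to obtain $\sum_{k\in\mathbb{T}\backslash\mathbb{S}(h)}\int_{B(h)}a_{k}(t)\Psi_{k,t}(x)dt+\int_{B(h)}S(x,t)dt$, and finish with your absolute-continuity argument --- which is fine as written --- to identify $\int_{B(h)}S(x,t)dt$ with $E(h)$. With this reordering your argument becomes a faithful copy of the paper's proof; as written, the two middle steps claim strictly more than Theorem 1 gives, at exactly the point where the theorem's content lies.
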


Now we consider the integral over $\gamma(0,h)$ in (98). The consideration of
integral over $\gamma(\pi,h)$ is similar. The complexity of the investigations
of the integral over $\gamma(0,h)$ is the following. In general, in the domain
enclosed by $\gamma(0,h)$ and $(-h,h)$ there exists infinite number of points
of $A.$ Moreover the interval $(-h,h)$ may contain, infinite number of SQ
defined in Definition 3. That is why, in the big contrary of the case $B(h)$
the set of indices $k$ for which $\frac{1}{\alpha_{k}}$ is nonintegrable over
$(-h,h)$ is not finite, in general, and may coincide with $\mathbb{Z}.$ This
situation complicate to replace $\gamma(0,h)$ with $[-h,h]$. Fortunately, now
we prove that, if we huddle together the terms $a_{k}(t)\Psi_{k,t}$ and
$a_{-k}(t)\Psi_{-k,t}$ for large value of $k$ then
\begin{equation}
\int\limits_{\gamma(0,h)}\left(  a_{k}(t)\Psi_{k,t}(x)+a_{-k}(t)\Psi
_{-k,t}(x)\right)  dt=\int\limits_{[-h,h]}\left(  a_{k}(t)\Psi_{k,t}%
(x)+a_{-k}(t)\Psi_{-k,t}(x)\right)  dt.
\end{equation}
Indeed, instead of the circle $C$ used in (32)\ taking the circle%
\[
\left\{  z\in\mathbb{C}:\left\vert z-(i2k\pi)^{n}\right\vert =\tfrac{1}%
{3}n(2\pi k+\tfrac{1}{4})^{n-1}\right\}  ,
\]
using Theorem 4 and repeating the proof of Theorem 1 we obtain

\begin{proposition}
For each $k>N_{h}(0)$ the equality (100) holds.
\end{proposition}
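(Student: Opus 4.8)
The plan is to repeat the proof of Theorem 1 almost verbatim, with three substitutions: the circle $C$ there becomes
\[
C_{k}=\left\{z\in\mathbb{C}:\left\vert z-(i2k\pi)^{n}\right\vert=\tfrac{1}{3}n\left(2\pi k+\tfrac{1}{4}\right)^{n-1}\right\},
\]
the local disk $U(t_{j})$ becomes the disk $\{t\in\mathbb{C}:\left\vert t\right\vert\leq h\}$, and the finite index set $\mathbb{T}(v,j)$ becomes the pair $\{k,-k\}$; the role played there by Proposition 4 is now played by Theorem 4. The conceptual point is that, although $\lambda_{k}(t)$ and $\lambda_{-k}(t)$ may coalesce and produce spectral singularities as $t$ ranges over $\{\left\vert t\right\vert\leq h\}$, their \emph{total} spectral projection stays analytic in $t$ because $C_{k}$ remains in the resolvent set of $L_{t}$ uniformly in $t$; this is exactly why the two terms must be bundled together in (100).

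First I would check that $C_{k}$ lies in the resolvent set of $L_{t}$ for every $\left\vert t\right\vert\leq h$ and $k>N_{h}(0)$. Since $\tfrac{1}{4}<\tfrac{1}{3}<\tfrac{1}{2}$, the circle $C_{k}$ lies strictly inside the washer (79), which by Theorem 4 contains no eigenvalue of $L_{t}$, while by the disk (78) it encloses precisely the two eigenvalues $\lambda_{k}(t)$ and $\lambda_{-k}(t)$. Hence $\Delta(\lambda,t)\neq0$ on the compact set $C_{k}\times\{\left\vert t\right\vert\leq h\}$, and by continuity there is a constant $c>0$ with $\left\vert\Delta(\lambda,t)\right\vert\geq c$ there, which is the exact analogue of (45).

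Next, defining $A(x,\lambda,t)$ as in (41) and the total projection
\[
T_{k}(x,t)=\int_{C_{k}}A(x,\lambda,t)\,d\lambda,
\]
the lower bound on $\Delta$ together with (42)--(45) and the compact support of $f$ give, just as in items $(a)$--$(d)$ of Theorem 1, that $A$ is bounded and continuous on $[0,1]\times C_{k}\times\{\left\vert t\right\vert\leq h\}$ and analytic in $t$, so that $T_{k}(x,\cdot)$ is analytic on the interior of the disk for each $x$. For $t\notin A$ the two enclosed eigenvalues are simple, and the residue computation of Theorem 1 gives
\[
T_{k}(x,t)=a_{k}(t)\Psi_{k,t}(x)+a_{-k}(t)\Psi_{-k,t}(x),
\]
the analogue of (47). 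Applying Cauchy's theorem to $T_{k}(x,\cdot)$ on the half-disk bounded by $\gamma(0,h)$ and $[-h,h]$ (a subset of $\{\left\vert t\right\vert\leq h\}$) then yields $\int_{\gamma(0,h)}T_{k}(x,t)\,dt=\int_{[-h,h]}T_{k}(x,t)\,dt$, the analogue of (48). Since $\gamma(0,h)\subset Q_{h}\backslash A$ the integrand on the semicircle is literally $a_{k}\Psi_{k,t}+a_{-k}\Psi_{-k,t}$, and the same identity holds on $[-h,h]$ for $t\notin A$, that is almost everywhere; as $A$ is countable and $T_{k}$ is bounded, both integrals equal the corresponding integrals of the pair, which is (100).

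The only genuine obstacle is the $t$-uniform separation of $C_{k}$ from $\sigma(L_{t})$: one cannot isolate $\lambda_{k}(t)$ alone by a contour avoiding the spectrum uniformly for all $\left\vert t\right\vert\leq h$, since near a collision the two eigenvalues approach each other and any small circle around one of them will be crossed by the other. Enclosing the pair by $C_{k}$ circumvents this, and the required uniform bound (45) is then supplied directly by the empty washer (79) of Theorem 4. Once Theorem 4 is invoked, the remainder is a verbatim transcription of the proof of Theorem 1.
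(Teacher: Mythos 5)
Your proof is correct and is essentially identical to the paper's own argument: the paper likewise replaces the circle $C$ of Theorem 1 by the circle of radius $\frac{1}{3}n(2\pi k+\frac{1}{4})^{n-1}$ centered at $(i2k\pi)^{n}$, invokes Theorem 4 to place that circle in the eigenvalue-free washer (79) so that it encloses exactly the pair $\lambda_{k}(t),\lambda_{-k}(t)$ uniformly for $|t|\leq h$, and then repeats the proof of Theorem 1. Your additional remarks (the uniform lower bound on $\Delta$, the analyticity of the total projection despite possible collisions, and the almost-everywhere identification on $[-h,h]$) simply spell out the steps the paper leaves implicit in the phrase ``repeating the proof of Theorem 1.''
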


Similarly, in the case $\gamma(\pi,h)$, instead of $C$ taking the circle
\[
\left\{  z\in\mathbb{C}:\left\vert z-(i(2k\pi+\pi))^{n}\right\vert =\tfrac
{1}{3}n(2\pi k+\pi+\tfrac{1}{4})^{n-1}\right\}  ,
\]
using Theorem 5 and repeating the proof of Theorem 1 we obtain

\begin{proposition}
For each $k>N_{h}(\pi)$ the following equality holds.
\begin{equation}
\int\limits_{\gamma(\pi,h)}a_{k}(t)\Psi_{k,t}+a_{-(k+1)}(t)\Psi_{-(k+1),t}%
dt=\int\limits_{[\pi-h,\pi+h]}a_{k}(t)\Psi_{k,t}+a_{-(k+1)}(t)\Psi
_{-(k+1),t}dt.
\end{equation}

\end{proposition}

Now let us consider the terms $a_{k}(t)\Psi_{k,t}$ corresponding to the
eigenvalues (76). In Remark 5 we proved that there exist a closed curve
$\Gamma(0)$ which contains inside only the eigenvalues (76) of $L_{t}$ for
$|t|\leq h$. Instead of the curve $C$ using $\Gamma(0)$ and repeating the
proof of Theorem 1 we obtain
\begin{equation}
\sum_{k\in\mathbb{N}(0)}\int\limits_{\gamma(0,h)}a_{k}(t)\Psi_{k,t}%
(x)dt=\int\limits_{[-h,h]}\sum_{k\in\mathbb{N}(0)}a_{k}(t)\Psi_{k,t}(x)dt.
\end{equation}
Instead of (76) using (77) in the same way we get
\begin{equation}
\sum_{k\in\mathbb{N}(\pi)}\int\limits_{\gamma(\pi,h)}a_{k}(t)\Psi
_{k,t}(x)dt=\int\limits_{[\pi-h,\pi+h]}\sum_{k\in\mathbb{N}(\pi)}a_{k}%
(t)\Psi_{k,t}(x)dt.
\end{equation}

To consider the integrals on the right hand side of (100)-(103) we introduce
some notations. First let us consider the integrals (100) and (101). It
follows from (100) that if $k>N_{h}(0),$ then the expression $a_{k}%
(t)\Psi_{k,t}(x)$ is integrable over any subinterval $I$ of $(-h,h)$ if and
only if $a_{-k}(t)\Psi_{-k,t}(x)$ is integrable over $I$. Therefore, by
Theorem 9, the set of all SQ $t\in(-h,h)$ such that $\frac{1}{\alpha_{k}}$ is
nonintegrable in any neighborhood of $t$ coincides with the set of all SQ
$t\in(-h,h)$ such that $\frac{1}{\alpha_{-k}}$ is nonintegrable. For
$k>N_{h}(0)$ denote by $V(k,h,0)$ the set of all SQ $t\in(-h,h)$ for which
$\frac{1}{\alpha_{k}}$ and hence $\frac{1}{\alpha_{-k}}$ is nonintegrable in
any neighborhood of those SQ. We say that $V(k,h,0)$ corresponds to $\left\{
k,-k\right\}  .$ In the same way for $k>N_{h}(\pi)$ we construct the set
$V(k,h,\pi)$ of SQ $t\in(\pi-h,\pi+h)$ corresponding to $\left\{
k,-(k+1)\right\}  .$ It is clear that we should handle the terms $a_{k}%
(t)\Psi_{k,t}(x)$ and $a_{-k}(t)\Psi_{-k,t}(x)$ if and only if $V(k,h,0)$ is
not an empty set.

To determine the handling terms corresponding to the indices of $\mathbb{N}%
(0)$ and $\mathbb{N}(\pi)$ (see (76) and (77)) we introduce the following
notations. Denote by $\mathbb{S}(h,0)$ and $\mathbb{S}(h,\pi)$ respectively
the sets of indices $k\in\mathbb{N}(0)$ and $k\in\mathbb{N}(\pi)$ for which
$\frac{1}{\alpha_{k}}$ is nonintegrable in $(-h,h)$ and $(\pi-h,\pi+h).$ It is
clear that $k\in\mathbb{S}(h,0)$ if and only if $k\in\mathbb{N}(0)$ and there
exists SQ $t\in(-h,h)$ such that $\frac{1}{\alpha_{k}}$ is nonintegrable in
any neighborhood of $t.$ Let $V(h,0)$ be the set of all SQ such that for each
$t\in V(h,0)$ there exists $k\in\mathbb{S}(h,0)\subset\mathbb{N}(0)$ for which
$\frac{1}{\alpha_{k}}$ is nonintegrable over any neighborhood of $t.$ In the
same way we define $V(h,\pi)$ for $\mathbb{S}(h,\pi).$

\begin{proposition}
The number of elements of the sets $\mathbb{S}(h,0),$ $\mathbb{S}(h,\pi),$
$V(h,0),V(k,h,0),$ $V(h,\pi)$ and $V(k,h,\pi)$ are finite.
\end{proposition}

\begin{proof}
The sets $\mathbb{S}(h,0)$ and $\mathbb{S}(h,\pi)$ are finite, since
$\mathbb{S}(h,0)\subset\mathbb{N}(0),$ $\mathbb{S}(h,\pi)\subset\mathbb{N}%
(\pi)$ and the sets $\mathbb{N}(0)$ and $\mathbb{N}(\pi)$ are finite. By
Definition 3, if$\ t\in V(h,0)$ then there exists $k\in\mathbb{N}(0)$ such
that $\lambda_{k}(t)$ is an ESS and hence is a multiple Bloch eigenvalue lying
inside of the closed curve $\Gamma(0)$ defined in Remark 5. On the other hand
by Remark 1 the number of the multiple eigenvalues lying in a bounded domain
is finite. Moreover for each multiple eigenvalue $a_{k}$ there exist at most
$n$ values of $t\in Q$ satisfying $\Delta(a_{k},t)=0$ (see (8)). Therefore
$V(h,0)$ is finite. In the same way we prove that the sets $V(k,h,0)$,
$V(h,\pi)$ and $V(k,h,\pi)$ are finite
\end{proof}

Now using (21) for $l=\gamma(0,h)$ and $l=\gamma(0,h),$ taking into account
(100)-(103) and arguing as in the proof of Theorem 3 we obtain

\begin{theorem}
Let $f\in E$. \ Then the following equality
\begin{equation}
\int\limits_{\gamma(0,h)}f_{t}(x)dt=F(0,h)+\sum_{k\in\mathbb{N}(0)\backslash
\mathbb{S}(h,0)}\int\limits_{(-h,h)}a_{k}(t)\Psi_{k,t}(x)dt+
\end{equation}%
\[
\sum_{k>N_{h}(0)}\int\nolimits_{[-h,h]}\left(  a_{k}(t)\Psi_{k,t}%
(x)dt+a_{-k}(t)\Psi_{-k,t}(x)\right)  dt
\]
holds, where%
\[
F(0,h)=\int\limits_{(-h,h)}\sum_{k\in\mathbb{S}(h,0)}a_{k}(t)\Psi
_{k,t}(x)dt=\lim_{\delta\rightarrow0}\sum_{k\in\mathbb{S}(h,0)}\int
\limits_{I(\delta,0)}a_{k}(t)\Psi_{k,t}(x)dt
\]
and $I(\delta,0)$ is obtained from $[-h,h]$ by deleting $\delta$-neighborhoods
of the SQ lying in $V(h,0).$ Moreover for $\left\vert k\right\vert >N_{h}(0)$
we have
\begin{equation}
\int\nolimits_{\lbrack-h,h]}\left(  a_{k}(t)\Psi_{k,t}(x)dt+a_{-k}%
(t)\Psi_{-k,t}(x)\right)  dt=
\end{equation}%
\[
\lim_{\delta\rightarrow0}\left(  \int\nolimits_{I(k,\delta,0)}a_{k}%
(t)\Psi_{k,t}(x)dt+\int\nolimits_{I(k,\delta,0)}a_{-k}(t)\Psi_{-k,t}%
(x)dt\right)  ,
\]
where $I(k,\delta,0)$ is obtained from $[-h,h]$ by deleting $\delta
$-neighborhoods of the SQ lying in $V(k,h,0).$ The series in (104) converge in
the norm of $L_{2}(a,b)$ for every $a,b\in\mathbb{R}.$
\end{theorem}

\begin{theorem}
Let $f\in E.$ Then the following equality
\begin{equation}
\int\limits_{\gamma(\pi,h)}f_{t}(x)dt=F(\pi,h)+\sum_{k\in\mathbb{N}%
(\pi)\backslash\mathbb{S}(h,\pi)}\int\limits_{[\pi-h,\pi+h]}a_{k}(t)\Psi
_{k,t}(x)dt+
\end{equation}%
\[
\sum_{k>N_{h}(\pi)}\int\nolimits_{[\pi-h,\pi+h]}\left(  a_{k}(t)\Psi
_{k,t}(x)dt+a_{-(k+1)}(t)\Psi_{-(k+1),t}(x)\right)  dt
\]
holds, where%
\[
F(\pi,h)=\int\limits_{(\pi-h,\pi+h)}\sum_{k\in\mathbb{S}(h,\pi)}a_{k}%
(t)\Psi_{k,t}(x)dt=\lim_{\delta\rightarrow0}\sum_{k\in\mathbb{S}(h,\pi)}%
\int\limits_{I(\delta,\pi)}a_{k}(t)\Psi_{k,t}(x)dt
\]
and $I(\delta,h)$ is obtained from $[\pi-h,\pi+h]$ by deleting $\delta
$-neighborhoods of the SQ lying in $V(h,\pi).$ Moreover, for $\left\vert
k\right\vert >N_{h}(\pi)$ we have
\begin{equation}
\int\nolimits_{\lbrack\pi-h,\pi+h]}\left(  a_{k}(t)\Psi_{k,t}(x)dt+a_{-(k+1)}%
(t)\Psi_{-(k+1),t}(x)\right)  dt=
\end{equation}%
\[
\lim_{\delta\rightarrow0}\left(  \int\nolimits_{I(k,\delta,\pi)}a_{k}%
(t)\Psi_{k,t}(x)dt+\int\nolimits_{I(k,\delta,\pi)}a_{-(k+1)}(t)\Psi
_{-(k+1),t}(x)dt\right)  ,
\]
where $I(k,\delta,\pi)$ is obtained from $[\pi-h,\pi+h]$ by deleting $\delta
$-neighborhoods of the SQ lying in $V(k,h,\pi).$ The series in (106) converge
in the norm of $L_{2}(a,b)$ for every $a,b\in\mathbb{R}.$
\end{theorem}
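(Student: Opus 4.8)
The plan is to mirror the proof of Theorem 9 line for line, replacing every object localized at $0$ by its counterpart localized at $\pi$. The quantity to be analyzed is $\Sigma(\gamma(\pi,h))=\int_{\gamma(\pi,h)}f_t(x)\,dt$, and the goal is to push the semicircular contour $\gamma(\pi,h)$ down onto the real segment $[\pi-h,\pi+h]$ while keeping track of the terms that fail to be integrable at the essential singular quasimomenta. Since $\gamma(\pi,h)\subset Q_h\backslash A$, the term-by-term integration result of [17] applies, so $\Sigma(\gamma(\pi,h))$ may be written as $\sum_{k\in\mathbb{Z}}\int_{\gamma(\pi,h)}a_k(t)\Psi_{k,t}(x)\,dt$, and I would split the index set $\mathbb{Z}$ into the finite block $\mathbb{N}(\pi)$ together with the two-term packets $\{k,-(k+1)\}$ for $k>N_h(\pi)$.

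First I would treat the finite block $\mathbb{N}(\pi)$. By the analog at $\pi$ of the $\mathbb{N}(0)$ identity used for Theorem 9 (contour shift via the curve $\Gamma(\pi)$, as in the proof of Theorem 1), the contour may be replaced by $[\pi-h,\pi+h]$ at the level of the whole block, whose total projection is bounded exactly as in part $(d)$ of the proof of Theorem 1. Then, following the proof of Theorem 3, I would isolate inside $\mathbb{N}(\pi)$ the subset $\mathbb{S}(\pi,h)$ of indices for which $\frac{1}{\alpha_k}$ is nonintegrable, writing the remaining indices $\mathbb{N}(\pi)\backslash\mathbb{S}(\pi,h)$ as honest Lebesgue integrals over $[\pi-h,\pi+h]$ and collecting the singular indices into the regularized term $E(\pi,h)$; the $\delta\to0$ limit form of $E(\pi,h)$ comes from absolute continuity of the Lebesgue integral after deleting $\delta$-neighborhoods of the quasimomenta in $V(h,\pi)$.

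Next I would treat the tail $k>N_h(\pi)$. For each such $k$, Proposition 7 provides the pairwise contour-shift equality that lets me replace $\gamma(\pi,h)$ by $[\pi-h,\pi+h]$ for the grouped integrand $a_k(t)\Psi_{k,t}+a_{-(k+1)}(t)\Psi_{-(k+1),t}$. Although each summand separately may be nonintegrable near a quasimomentum in $(\pi-h,\pi+h)$, Theorem 7 shows the pair behaves like $e^{itx}(v_k(t)e^{i(2\pi k+\pi)x}+v_{-(k+1)}(t)e^{i(-2\pi(k+1)+\pi)x})$ up to an $O(k^{-1})$ remainder, so the grouped object is a bounded total projection and hence integrable; the regularized form (106), obtained by deleting the $V(k,h,\pi)$-neighborhoods, then follows from absolute continuity. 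Summing the block contribution and the tail contributions, and using the finiteness of $\mathbb{S}(\pi,h)$, $V(h,\pi)$ and $V(k,h,\pi)$ from Notation 1, assembles the stated identity (105), with $L_2(a,b)$-convergence inherited from the term-by-term integration theorem.

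The main obstacle is identical to the one met in Theorem 9: it is not the contour deformation itself but the bookkeeping of nonintegrable terms at the essential singular quasimomenta. The crux is that integrability is restored only after the correct grouping — the finite bundle over $\mathbb{N}(\pi)$ and the packets $\{k,-(k+1)\}$ for large $k$ — since only these groupings form bounded total projections (via Theorem 7 and Proposition 7), whereas individual terms $a_k(t)\Psi_{k,t}(x)$ can blow up nonintegrably. Checking that the regularizing limits can be taken uniformly enough to preserve $L_2$-convergence, using the uniform-in-$t$ estimate of Theorem 7 for $|t-\pi|\le h$, is the step requiring the most care.
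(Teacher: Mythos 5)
Your proposal is correct and follows essentially the same route as the paper, which proves this theorem by transferring the proof of the $\gamma(0,h)$ case verbatim to $\pi$: term-by-term integration over $\gamma(\pi,h)$, the block identity (102) for $\mathbb{N}(\pi)$ via the curve $\Gamma(\pi)$, Proposition 7 and Theorem 7 for the pairs $\{k,-(k+1)\}$ with $k>N_{h}(\pi)$, and the Theorem 3 argument (absolute continuity of the Lebesgue integral, finiteness of the sets in Notation 1) to produce $E(\pi,h)$ and the regularized limits. Your bookkeeping of where integrability can fail and why only the grouped packets give bounded total projections matches the paper's reasoning exactly.
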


Now using (98) and Theorems 10-12 we obtain

\begin{theorem}
For each $f\in E$ the following equality%
\begin{equation}
2\pi f=F(h)+F(0,h)+F(\pi,h)+\sum_{k\in\mathbb{Z}\backslash\mathbb{S}(h)}%
\int_{B(h)}a_{k}(t)\Psi_{k,t}dt+
\end{equation}%
\[
\sum_{k\in\mathbb{N}(0)\backslash\mathbb{S}(h,0)}\int\nolimits_{[-h,h]}%
a_{k}(t)\Psi_{k,t}dt+\sum_{k>N_{h}(0)}\int\nolimits_{[-h,h]}\left[
a_{k}(t)\Psi_{k,t}+a_{-k}(t)\Psi_{-k,t}\right]  dt+
\]%
\[
\sum_{k\in\mathbb{N}(\pi)\backslash\mathbb{S}(h,\pi)}\int\nolimits_{[\pi
-h,\pi+h]}a_{k}(t)\Psi_{k,t}dt+\sum_{k>N_{h}(\pi)}\int\nolimits_{[\pi
-h,\pi+h]}\left[  a_{k}(t)\Psi_{k,t}+a_{-(k+1)}(t)\Psi_{-(k+1),t}\right]  dt
\]
holds. All series in (108) converge in the norm of $L_{2}(a,b)$ for every
$a,b\in\mathbb{R}.$
\end{theorem}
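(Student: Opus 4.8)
The plan is to read off (107) directly from the decomposition (98), replacing each of the three pieces $\Sigma(l(h,\varepsilon))$, $\Sigma(\gamma(0,h))$ and $\Sigma(\gamma(\pi,h))$ by the expressions furnished by Theorems 8, 9 and 10. First I would observe that, by the term-by-term integration proved in [17] for any continuous curve lying in $Q_{h}\backslash A$ (the same fact already used to pass from (97) to (98)), each of the three summands is nothing but the integral of $f_{t}(x)$ over the corresponding curve; indeed, by (26) one has $\sum_{k}a_{k}(t)\Psi_{k,t}=f_{t}$, so that
\[
\Sigma(l(h,\varepsilon))=\int_{l(h,\varepsilon)}f_{t}(x)\,dt,\quad
\Sigma(\gamma(0,h))=\int_{\gamma(0,h)}f_{t}(x)\,dt,\quad
\Sigma(\gamma(\pi,h))=\int_{\gamma(\pi,h)}f_{t}(x)\,dt.
\]
These are precisely the three integrals evaluated in the preceding theorems, and the curves $l(h,\varepsilon)$, $\gamma(0,h)$, $\gamma(\pi,h)$ all lie in $Q_{h}\backslash A$, so the hypotheses of those theorems are met.

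Next I would substitute. Theorem 8 trades the semicircular detours of $l(h,\varepsilon)$ around $t_{1},\dots,t_{s}$ for $E(h)+\sum_{k\in\mathbb{Z}\backslash\mathbb{S}(h)}\int_{B(h)}a_{k}(t)\Psi_{k,t}(x)\,dt$; Theorem 9 replaces $\int_{\gamma(0,h)}f_{t}\,dt$ by $E(0,h)$, the finite sum over $\mathbb{N}(0)\backslash\mathbb{S}(0,h)$ of integrals over $[-h,h]$, and the paired tail $\sum_{k>N_{h}(0)}\int_{[-h,h]}(a_{k}\Psi_{k,t}+a_{-k}\Psi_{-k,t})\,dt$; and Theorem 10 performs the analogous replacement over $[\pi-h,\pi+h]$, with the pairing $\{k,-(k+1)\}$ for $k>N_{h}(\pi)$. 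Adding the three identities and grouping the singular contributions as $E(h)+E(0,h)+E(\pi,h)$ reproduces (107) term for term. The convergence in the norm of $L_{2}(a,b)$ of every series appearing in (107) is inherited, block by block, from the convergence statements already asserted in Theorems 8, 9 and 10.

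The step I expect to require the most care is the bookkeeping of the index set rather than any new estimate. Globally the eigenvalues are numbered by $\mathbb{Z}$ (Remark 5), but over $[-h,h]$ they are organized into the finite block $\mathbb{N}(0)$ together with the pairs $\{k,-k\}$ for $k>N_{h}(0)$, over $[\pi-h,\pi+h]$ into $\mathbb{N}(\pi)$ together with the pairs $\{k,-(k+1)\}$ for $k>N_{h}(\pi)$, while over $B(h)$ they are summed plainly over $\mathbb{Z}\backslash\mathbb{S}(h)$. I would verify that these three groupings are mutually consistent and, crucially, that the coupling of $a_{k}\Psi_{k,t}$ with $a_{-k}\Psi_{-k,t}$ (respectively with $a_{-(k+1)}\Psi_{-(k+1),t}$)---the very device that neutralizes the spectral singularity at infinity in Propositions 6 and 7---is preserved intact when the three pieces are combined. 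Once this matching is confirmed, (107) follows by pure substitution, with no residual analysis.
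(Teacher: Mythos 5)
Your proposal is correct and matches the paper's own argument: the paper likewise obtains (107) by combining the decomposition of $2\pi f(x)$ into the three curve integrals of $f_{t}$ (its equation (96), equivalently your (98) read backwards via (26)) with the identities of Theorems 8, 9 and 10, the convergence statements being inherited block by block. Your extra care about the index bookkeeping ($\mathbb{N}(0)$, $\mathbb{N}(\pi)$, the pairings $\{k,-k\}$ and $\{k,-(k+1)\}$) is exactly the content already packaged inside those three theorems, so nothing further is needed.
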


Now we prove that the expansion (108) has the elegant form (26) if and only if
$L$ has no ESS and ESS at infinity. For this we prove the following lemma by
using Theorems 6-8.

\begin{lemma}
$(a)$ If $f\in E,$ then the equality
\[
\lim_{k\rightarrow\infty}(f_{t},\Psi_{k,t}^{\ast})=0
\]
holds uniformly with respect to $t\in\left(  \lbrack0,2\pi\right)  \backslash
A).$

$(b)$ For $t\in\lbrack-h,h]$ and $t\in\lbrack\pi-h,\pi+h]$ respectively the
inequalities
\[
\left\vert u_{k,k}(t)u_{k,k}^{\ast}(t)\right\vert +\left\vert u_{k,k}%
(t)u_{k,-k}^{\ast}(t)\right\vert +\left\vert u_{k,-k}(t)u_{k,k}^{\ast
}(t)\right\vert +\left\vert u_{k,-k}(t)u_{k,-k}^{\ast}(t)\right\vert >1/4
\]
and
\[
\left\vert u_{k,k}(t)u_{k,k}^{\ast}(t)\right\vert +\left\vert u_{k,k}%
(t)u_{k,-k-1}^{\ast}(t)\right\vert +\left\vert u_{k,-k-1}(t)u_{k,k}^{\ast
}(t)\right\vert +\left\vert u_{k,-k-1}(t)u_{k,-k-1}^{\ast}(t)\right\vert >1/4
\]
hold for the large values of $k.$
\end{lemma}

\begin{proof}
$(a)$ By (2) Theorem 8 continues to hold for the normalized eigenfunction
$\Psi_{k,t}^{\ast}$ of $L_{t}^{\ast}$ and hence the following uniform with
respect to $t\in\left(  \lbrack0,2\pi\right)  \backslash A)$ formula
\begin{equation}
\Psi_{k,t}^{\ast}(x)=e^{itx}(u_{k,k}^{\ast}(t)e^{i2\pi kx}+u_{k,-k}^{\ast
}(t)e^{-i2\pi kx}+u_{k,-(k+1)}^{\ast}(t)e^{-i2\pi(k+1)x}+\varphi_{k,t}^{\ast
}(x)),
\end{equation}
where $u_{k,m}^{\ast}(t)=\left(  \Psi_{k,t}^{\ast}(x),e^{i(2\pi m+t)x}\right)
,$ $\sup\nolimits_{x\in\lbrack0,1]}|\varphi_{k,t}^{\ast}(x)|=O\left(
k^{-1}\ln\left\vert k\right\vert \right)  ,$ holds. Since%
\[
(f_{t},\Psi_{k,t}^{\ast})=\int_{-\infty}^{\infty}f(x)\overline{\Psi
_{k,t}^{\ast}(x)}dx
\]
(see (19)), by (109), it is enough to prove that the following four integrals
approach zero uniformly with respect to $t\in\left(  \lbrack0,2\pi\right)  $
as $k\rightarrow\infty.$
\[
\int_{\mathbb{R}}f(x)e^{-i(2\pi k+t)x}dx,\text{ }\int_{\mathbb{R}%
}f(x)e^{i(2\pi k+t)x}dx,\text{ }\int_{\mathbb{R}}f(x)e^{i(2\pi(k+1)+t)x}%
dx,\text{ }\int_{[0,1]}f_{t}(x)\overline{\varphi_{k,t}^{\ast}(x)}dx.
\]
The first three integrals converge uniformly to zero as $k\rightarrow\infty,$
since the Fourier transform%
\[
\int_{\mathbb{R}}f(x)e^{-i\lambda x}dx
\]
of $f\in E$ (see Condition 1$(ii)$) converge to zero as $\lambda
\rightarrow\infty.$ The convergence of the Fourth integral follows from
Condition 1$(i)$ and the estimation for $\varphi_{k,t}^{\ast}(x)$ given in (109).

The proof of $(b)$ follows from Theorem 6 and Theorem 7
\end{proof}

\begin{theorem}
Let $n=2\mu$ and $L$ has no ESS at infinity. Then

$(a)$ There exist a positive constants $M$ and $N$ such that%
\begin{equation}
\int_{\lbrack0,2\pi)}\left\vert \left(  \alpha_{k}(t)\right)  ^{-1}\right\vert
dt<M,\text{ }\forall\left\vert k\right\vert >N.
\end{equation}

$(b)$ The number of SQ and ESS is finite. The set $\mathbb{S}$ of all $k$ for
which $\frac{1}{\alpha_{k}}$ \ is nonintegrable in $[0,2\pi)$ is finite.

$(c)$ For each $f\in E$ the following spectral expansion holds
\begin{equation}
2\pi f(x)=\int\limits_{[0,2\pi)}\sum\limits_{k\in\mathbb{S}}a_{k}(t)\Psi
_{k,t}(x)dt+\sum_{k\in\mathbb{Z}\backslash\mathbb{S}}\int\limits_{[0,2\pi
)}a_{k}(t)\Psi_{k,t}(x)dt,
\end{equation}
where
\begin{equation}
\int\limits_{\lbrack0,2\pi)}\sum\limits_{k\in\mathbb{S}}a_{k}(t)\Psi
_{k,t}(x)dt=\lim_{\delta\rightarrow0}\sum_{k\in\mathbb{S}}\int
\nolimits_{I(\delta)}a_{k}(t)\Psi_{k,t}dt
\end{equation}
and $I_{\delta}$ is obtained from $[0,2\pi)$ by deleting $\delta
$-neighborhoods of the SQ. The series in (111) converge in the norm of
$L_{2}(a,b)$ for every $a,b\in\mathbb{R}.$
\end{theorem}

\begin{proof}
The proof of $(a)$ follows Definition 4. $(b)$ follows from $(a).$ Now let us
prove $(c).$ For this we show that
\begin{equation}
\int\nolimits_{\lbrack0,2\pi)}a_{k}(t)\Psi_{k,t}(x)dt\text{ }%
\end{equation}
exist for $k\in\mathbb{Z}\backslash\mathbb{S}$ and its $L_{2}(-p,p)$ norm
tends to zero as $\left\vert k\right\vert \rightarrow\infty.$

The existence of (113) follows from (110) and Theorem 9. Using (110) and Lemma
3 and taking into account that \ $\left\vert \Psi_{k,t}(x)\right\vert \leq4$
for $k>N$ and $x\in\lbrack-p,p]$ (see (83)) we obtain that (113) converge to
zero as $k\rightarrow\infty$ uniformly with respect to $x\in\lbrack-p,p].$
Hence $L_{2}(-p,p)$ norm of (113) tends to zero as $k\rightarrow\infty.$ It
means that, in the contrary to the general case we need not to huddle together
the terms $a_{k}(t)\Psi_{k,t}(x)$ and $a_{-k}(t)\Psi_{-k,t}(x)$ ($a_{k}%
(t)\Psi_{k,t}$ and $a_{-(k+1)}(t)\Psi_{-(k+1),t}$ ) in the case of integration
over $[-h,h]$ ($[\pi-h,\pi+h]$) (see (108)). Hence the proof follows from
$(a),$ $(b)$ and Theorem 13
\end{proof}

Finally we prove the following main result.

\begin{theorem}
If $L$ has no ESS and ESS at infinity then for each $f\in E$ (26) holds and
the series in (26) converge in the norm of $L_{2}(a,b)$ for every
$a,b\in\mathbb{R}.$ Conversely, if $L$ has ESS or ESS at infinity, then there
exists $f\in L_{2}(-\infty,\infty)$ such that the spectral decomposition (26)
does not hold.
\end{theorem}

\begin{proof}
By Theorem 14 if $L$ has no ESS at infinity then (111) holds. If, in addition,
$L$ has no ESS then the set $\mathbb{S}$ is an empty set. Therefore (26)
follows from (111).

Now suppose that $L$ has ESS or ESS at infinity. If $L$ has ESS then is
follows from Theorem 13 that the spectral expansion has no form (26). Now
consider the case when $L$ has no ESS and hence has only ESS at infinity. Then
by Theorem 14 $(a)$ there exist sequence of integers $k_{s}$ such that the
following integral exists and
\begin{equation}
\int_{\lbrack0,2\pi)}\left\vert \left(  \alpha_{k_{s}}(t)\right)
^{-1}\right\vert dt\rightarrow\infty
\end{equation}
as $s\rightarrow\infty.$ Due to (2), $\Psi_{k,t}^{\ast}$ \ satisfy (7). It
with (7) implies that (45) holds uniformly in $[h,\pi-h]\cup\lbrack\pi
+h,2\pi-h]$. Therefore (114) holds if $\ [0,2\pi)$ is replaced by
$[0,h]\cup\lbrack\pi-h,\pi+h]\cup\lbrack2\pi-h,2\pi)$. It implies that,
without loss of generality, it can be assumed that there exist sequence of
integers, denoted for simplicity of notations again by $k_{s},$ such that
\begin{equation}
\lim_{s\rightarrow\infty}\int_{[0,h]}\left\vert \left(  \alpha_{k_{s}%
}(t)\right)  ^{-1}\right\vert dt=\infty,
\end{equation}
where $h<1/32$ (see (67)).

Now we prove that if (115) holds then there exists $f\in L_{2}(-\infty
,\infty)$ and sequence of integers, denoted for simplicity of notations again
by $k_{s},$ such that
\begin{equation}
\left\Vert \int_{\lbrack0,2\pi)}a_{k_{s}}(t)\Psi_{k_{s},t}(x)dt\right\Vert
\geq\frac{1}{2}%
\end{equation}
for large values of $s.$ Using Lemma 3$(b)$ and (115) we obtain that
\begin{equation}
\lim_{s\rightarrow\infty}\left(  \max_{j,m\in\left\{  -k_{s},k_{s}\right\}
}\int\nolimits_{[0,h]}\left\vert \left(  \alpha_{k_{s}}(t)\right)  ^{-1}%
u_{j}(t)u_{m}^{\ast}(t)\right\vert dt\right)  =\infty,
\end{equation}
where $u_{k_{s},j}(t)$ and $u_{k_{s},m}^{\ast}(t)$ are re-denoted by
$u_{j}(t)$ and $u_{m}^{\ast}(t)$ respectively. It is clear that there exists
subsequence of $\left\{  k_{s}\right\}  $, denoted for simplicity of notation
again by $\left\{  k_{s}\right\}  $, such that maximum in (117) gets for a
fixed pair $\left(  j,m\right)  $. Suppose without loss of generality that the
maximum gets for $j=m=k_{s}.$ Then
\begin{equation}
\lim_{s\rightarrow\infty}\int\nolimits_{[0,h]}\left\vert \left(  \alpha
_{k_{s}}(t)\right)  ^{-1}u_{k_{s}}(t)u_{k_{s}}^{\ast}(t)\right\vert dt=\infty.
\end{equation}
Let $b(s)$ denotes the integral in (118). Since $b(s)\rightarrow\infty$ as
$s\rightarrow\infty,$ the sequence $\left\{  k_{s}\right\}  $ can be chosen so
that
\begin{equation}
k_{s}>s^{2},\text{ }b(s)>s^{2}%
\end{equation}
for all $\left\vert s\right\vert >N,$ where $N$ is a large number. Define $f$
by formula
\begin{equation}
f_{t}(x)=e^{itx}\sum\limits_{s>N}c_{s}(t)\left(  b(s)\right)  ^{-1}e^{i2\pi
k_{s}x},
\end{equation}
where $c_{s}$ is a measurable function such that $\left\vert c_{s}%
(t)\right\vert =1$ for all $t\in\lbrack0,h]$ and $c_{s}(t)=0$ for all
$t\in\lbrack0,2\pi)\backslash\lbrack0,h].$ It follows from (119) that the
function defined by (120) belongs to $L_{2}([0,1]\times\lbrack0,2\pi)).$ Then
$f$ defined by (14) belongs to $L_{2}(-\infty,\infty)$. Moreover, $c_{s}(t)$
can be chosen so that
\[
c_{s}(t)\left(  b(s)\right)  ^{-1}\left(  \overline{\alpha_{k_{s}}(t)}\right)
^{-1}\overline{u_{k_{s}}^{\ast}(t)}u_{k_{s}}(t)=\left(  b(s)\right)
^{-1}\left\vert \left(  \alpha_{k_{s}}(t)\right)  ^{-1}u_{k_{s}}^{\ast
}(t)u_{k_{s}}(t)\right\vert ,\text{ }\forall t\in\lbrack0,h].
\]
Therefore using the definition of $b(s)$ we obtain%
\begin{equation}
\int\nolimits_{\lbrack0,2\pi)}\left(  \overline{\alpha_{k_{s}}(t)}\right)
^{-1}(f_{t},\Psi_{k_{s},t}^{\ast})u_{k_{s}}(t)dt=\left(  b(s)\right)
^{-1}\int\nolimits_{[0,h]}\left\vert \left(  \alpha_{k_{s}}(t)\right)
^{-1}u_{k_{s}}(t)u_{k_{s}}^{\ast}(t)\right\vert dt=1.
\end{equation}
\ \ Now using (121), Theorem 6 and the obvious inequality
\begin{equation}
e^{itx}=1+c(t,x),\text{ }\left\vert c(t,x)\right\vert <2h<1/16
\end{equation}
for all $t\in\lbrack0,h]$ and $x\in\lbrack0,1]$ we obtain that
\begin{equation}
\int\nolimits_{\lbrack0,h]}a_{k_{s}}(t)\Psi_{k_{s},t}dt=I_{k_{s}}+I_{-k_{s}%
}+C_{k_{s}}+C_{-k_{s}}+O(k_{s}^{-1}\ln\left\vert k_{s}\right\vert ),
\end{equation}
where $a_{k_{s}}(t)=\left(  \overline{\alpha_{k_{s}}(t)}\right)  ^{-1}%
(f_{t},\Psi_{k_{s},t}^{\ast})=\left(  b(s)\right)  ^{-1}\left(  \overline
{\alpha_{k_{s}}(t)}\right)  ^{-1}\overline{u_{k_{s}}^{\ast}(t)}c_{s}(t),$%
\[
I_{\pm k_{s}}=\left(  \int\nolimits_{[0,h)]}a_{k_{s}}(t)u_{\pm k_{s}%
}(t)dt\right)  e^{\pm i2\pi k_{s}x}\text{ , }C_{\pm k_{s}}=\left(
\int\nolimits_{[0,h)]}a_{k_{s}}(t)u_{\pm k_{s}}(t)c(t,x)dt\right)  e^{\pm
i2\pi k_{s}x}\text{ .}%
\]
By (121) and (122)\ the following relations hold
\[
\left\Vert C_{\pm k_{s}}\right\Vert <1/16,\text{ }\left(  I_{k_{s}},I_{-k_{s}%
}\right)  =0,\text{ }\left\Vert I_{k_{s}}\right\Vert =1.
\]
Therefore from (123) we obtain that for the chosen $f$ the relation (116)
holds. It implies that the series in (26) does not converge in norm of
$L_{2}(-p,p),$ for any integer $p$
\end{proof}

Note that, since $E$ contain the set of all continuous and compactly supported
functions which is a dense set in $L_{2}(-\infty,\infty),$ one can find $f\in
E$ for which (26) does not hold. Since it is very technical we do not give the
corresponding calculations in this paper.

\begin{conclusion}
In Remark 4 we discussed the necessity of the parenthesis comprising the
eigenfunctions corresponding to the eigenvalue which is ESS. In Theorem 15 we
proved that if the operator $L$ has ESS at infinity then the series, in
general, does not converges without parenthesis. The necessity of the
parenthesis for the convergence of the series taking parts in the spectral
decomposition (see (108)) can also be explained as follows. The following
cases are possible.

Case 1. The operator $L$ has infinitely many ESS. Then as we explained in
Remark 4 the parenthesis for each ESS is necessary and hence we will have the
infinitely many parenthesis and hence the spectral expansion series converge
with parenthesis.

Case 2. The number of ESS is at most finite and $L$ has ESS at infinity that
is (25) holds. It is clear that the series of spectral expansion for $f$
converges without parenthesis if and only if (113) converges to zero as
$k\rightarrow\infty$. However, for the consideration of the limit of (113),
due to (23), we meet with the indeterminate form $\infty\cdot0,$ since (25)
holds and $\left(  f_{t},\Psi_{k,t}^{\ast}\right)  \rightarrow0$ as
$k\rightarrow\infty$ because of Lemma 3. In fact, (113) may converges to zero
if $\left(  f_{t},\Psi_{k,t}^{\ast}\right)  \rightarrow0$ more rapidly than
the integral in (25) tends to infinity as $s\rightarrow\infty$. The converse
statement is also possible. It depends on $f$ and behavior of $\alpha_{k}(t)$
near to $0$ and $\pi.$

Thus the nature of the spectral expansion problem requires to use the
parenthesis and the complicated form of the spectral expansion theorem is
connected with a complicated picture of the spectrum and projections of the
operators with complex-valued periodic coefficients. Moreover, one can obtain
a spectral expansion without parenthesis if and only if $L$ has no ESS (or
equivalently has no SQ) and ESS at infinity (see the last theorem). It implies
that, in the general case, it is necessary to use the parenthesis, since the
operator $L$ has ESS and ESS at infinity.
\end{conclusion}


\begin{thebibliography}{99}                                                                                               %


\bibitem {}M. G. Gasymov, Spectral analysis of a class of second-order
nonself-adjoint differential operators, Fankts. Anal. Prilozhen 14 (1980), 14-19.

\bibitem {}I. M. Gelfand, Expansion in series of eigenfunctions of an equation
with periodic coefficients, Sov. Math. Dokl. {73} (1950), 1117-1120.

\bibitem {}F. Gesztesy and V. Tkachenko, A criterion for Hill's operators to
be spectral operators of scalar type, J. Analyse Math. 107 (2009) 287--353.

\bibitem {}D. C. McGarvey, Differential operators with periodic coefficients
in $L_{p}(-\infty,\infty)$, Journal of Mathematical Analysis and Applications,
{11} (1965), 564-596.

\bibitem {}V. P. Mikhailov, On the Riesz bases in $L_{2}(0,1),$ Sov. Math.
Dokl. {25} (1962), 981-984.

\bibitem {}M. A. Naimark, Linear Differential Operators, George G. Harrap,
London, 1967.

\bibitem {}F. S. Rofe-Beketov, The spectrum of nonself-adjoint differential
operators with periodic coefficients. Sov. Math. Dokl. {4} (1963), 1563-1564.

\bibitem {}E. C. Titchmarsh, Eigenfunction Expansion (Part II). Oxford Univ.
Press, 1958.

\bibitem {}V. A. Tkachenko, Eigenfunction expansions associated with
one-dimensional periodic differential operators of order 2n, Funktsional.
Anal. i Prilozhen., 41:1 (2007), 66--89.

\bibitem {}O. A. Veliev, The spectrum and spectral singularities of
differential operators with complex-valued periodic coefficients. Differential
Cprime Nye Uravneniya, {19} (1983), 1316-1324.

\bibitem {}O. A. Veliev, The spectral resolution of the nonself-adjoint
differential operators with periodic coefficients. Differential Cprime Nye
Uravneniya, {22} (1986), 2052-2059.

\bibitem {}O. A. Veliev, Spectral expansion for a non-self-adjoint periodic
differential operator, Russian Journal of Mathematical Physics, Vol. 13, No. 1
(2006), 101-110.

\bibitem {}O. A. Veliev, Uniform Convergence of the Spectral Expansion for a
Differential Operator with Periodic Matrix Coefficients, Boundary Value
Problems, Vol. 2008, Article ID 628973 (2008), 22 pages.

\bibitem {}O. A. Veliev, Asymptotic analysis of non-self-adjoint Hill's
operators, Central European Journal of Mathematics, Vol. 11, No 12 (2013), 2234-2256.

\bibitem {}O. A. Veliev, On the Spectral Singularities and Spectrality of the
Hill Operator, Operators and Matrices, Vol. 10, No. 1 (2016), 57-71.

\bibitem {}O. A. Veliev, Essential spectral singularities and the spectral
expansion for the Hill operator, Communication on Pure and Applied analysis,
Vol.16, No. 6 (2017), 2227-2251.
\end{thebibliography}
\end{document}